\title{Parking on the integers}
\author{Micha{\l} Przykucki\thanks{School of Mathematics, University of Birmingham, Edgbaston, Birmingham, United Kingdom. Supported by the EPSRC grant EP/P026729/1. \newline  E-mail: \texttt{michal.przykucki@gmail.com}.} \and Alexander Roberts\thanks{Mathematical Institute, University of Oxford, Andrew Wiles Building, Radcliffe Observatory Quarter, Woodstock Road, Oxford, United Kingdom. \newline  E-mail: \texttt{\{robertsa, scott\}@maths.ox.ac.uk}.} \and Alex Scott\footnotemark[2] \thanks{Supported by a Leverhulme Trust Research Fellowship.}}
\newtheoremstyle{case}{}{}{\normalfont}{}{shape}{:}{ }{}
\newtheorem{thm}{Theorem}[section]
\newtheorem{lem}[thm]{Lemma}
\newtheorem{prop}[thm]{Proposition}
\newtheorem{cor}[thm]{Corollary}
\theoremstyle{definition}
\newtheorem{defn}[thm]{Definition}
\newtheorem{rem}[thm]{Remark}
\numberwithin{equation}{section}
\newtheoremstyle{case}{}{}{\normalfont}{}{shape}{\normalfont:}{ }{}
\theoremstyle{case}
\def\comment#1{}
\newcommand{\beq}{\begin{eqnarray*}}
\newcommand{\eeq}{\end{eqnarray*}}
\def\build#1_#2^#3{\mathrel{\mathop{\kern 0pt#1}\limits_{#2}^{#3}}}
\newcommand{\beqs}{\begin{eqnarray}}
\newcommand{\eeqs}{\end{eqnarray}}
\newcommand{\eeqsn}{\end{eqnarray*}}                     
\newcommand{\bP}{\mathbb{P}}
\newcommand{\bN}{\mathbb{N}}
\newcommand{\bE}{\mathbb{E}}
\newcommand{\bZ}{\mathbb{Z}}
\newcommand{\cF}{\mathcal{F}}
\newcommand{\indi}{\mathds{1}}
\DeclareMathOperator{\Geom}{Geom}
\DeclareMathOperator{\Bin}{Bin}
\numberwithin{equation}{section}
\begin{document}
\maketitle
\begin{abstract}
Models of parking in which cars are placed randomly and then move according to a deterministic rule have been studied since the work of Konheim and Weiss  in the 1960s.  Recently,  Damron, Gravner, Junge, Lyu, and Sivakoff introduced a model in which cars are both placed and move at random.  Independently at each point of a Cayley graph $G$,  we place a car with probability $p$, and otherwise an empty parking space. Each car independently executes a random walk until it finds an empty space in which to park. In this paper we introduce three new techniques for studying the model, namely the space-based parking model, and the strategies for parking and for car removal. These allow us to study the original model by coupling it with models where parking behaviour is easier to control.  Applying our methods to the one-dimensional parking problem in $\bZ$, we improve on previous work, showing that for $p<1/2$ the expected journey length of a car is finite, and for $p=1/2$ the expected journey length by time $t$ grows like $t^{3/4}$ up to a polylogarithmic factor.
\end{abstract}

\section{Introduction}
\label{sec:intro}

Let $n \geq 1$ and let $P_n$ be a directed path on $[n] = \{1,2,\ldots,n\}$ with directed edges from $i$ to $i-1$ for $i=2,3,\ldots,n.$ Let $1 \leq m \leq n$ and assume that $m$ drivers arrive at vertex $n$ one by one, with the $i$th driver willing to park in vertex $X_i \in [n].$ If the $i$th driver finds $X_i$ empty, they park there. If not, they continue their drive towards $1,$ parking in the first available parking space. If no such spot can be found, the driver leaves the path without parking. We say that $(x_1, \ldots, x_m),$ with $x_1, \ldots, x_m \in [n],$ is a \em parking function \em for $P_n$ if for $X_i = x_i$ for $1 \leq i \leq m,$ all $m$ drivers park on the path.

Parking functions were first studied in the 1960s by \cite{KonheimWeiss-parkingPath}. They evaluated the number of parking functions, which is equivalent to evaluating the probability that an $m$-tuple of independent random variables uniformly distributed on $[n]$ gives a parking function. A similar model, with $P_n$ replaced by a uniform random rooted Cayley tree on $[n]$ was studied by \cite{LacknerPanholzer}. Motivated by finding a probabilistic explanation for some phenomenons observed in \cite{LacknerPanholzer}, \cite{GoldschmidtPrzykucki} analyzed the parking processes on critical Galton-Watson trees, as well as on trees with Poisson(1) offspring distribution conditioned on non-extinction, in both cases with the edges directed towards the root. Note that in all the setups above, drivers have only one choice of route at any time of the process.

In this paper, we are concerned with a related model, introduced by \cite{DGJLS}, in which the cars move at random. Let $L=(V,E)$ be a Cayley graph on a group $V$ with generating set $R$, and let  $\mu$ be a probability distribution on $R$: we will refer to such a triple $(L,R,\mu)$ as a {\em parking triple}. At time $0$, each position $v \in V$ is independently assigned a car with probability $p$ 
or a parking space with probability $1-p$. The cars follow independent random walks with increments  $\mu$ 
and each car continues to follow the random walk until it finds a free space where it parks (if more than one car arrives at a free space at the same time, then one is chosen to park according to some rule).\footnote{We note that \cite{DGJLS} work in a slightly more general setting, see Section 2 in \cite{DGJLS}. While our results in Section \ref{sec:definitions} also hold in the setting in \cite{DGJLS}, we believe that the class of Cayley graphs is a fairly general setting and the link with lattices is a little clearer.}

There is a wide range of well studied models with a flavour similar to parking funtions. In particular we note various gas particle models which track the movement and annihilation of particles in a system. Of particular note are \em annihilating random walks \em (see \cite{EN74} or \cite{Arr83} for example) where particles move according to some random path and annihilate upon collision with another particle; and \em two-type diffusion limited annihilating systems \em where particles of two types move according to some random path and annihilate upon collision with a particle of the other type (see \cite{BL91} or \cite{JJLS} for example). Results analogous to those proven here are well known in both of these settings.

We are interested in the distribution of journey lengths of cars. We introduce the stopping time $\tau^v$ where $\tau^v = 0$ if position $v$ is a parking space, and otherwise $\tau^v$ is the time the car starting at $v$ takes to park ($\tau^v= \infty$ if the car never parks). We also write $\tau = \tau^0$ (by symmetry we only need to consider $v=0$). Given $t \ge 0$ and a vertex $v,$ let
\[
V^v(t)= \left|\left\{(u,s) \in V \times [t] : \mbox{car $u$ visits $v$ at time $s$}\right\}\right| + \indi_{\{v \mbox{ \small is a car}\}}
\]
be the number of cars that visit $v$ up to time $t.$

In the particular case of the lattice ${\mathbb Z}^d$ (with edges joining lattice points at Euclidean distance 1), \cite{DGJLS} prove the following theorem.
\begin{thm}\label{thm:DGJLS}
Consider the parking process on $\bZ^d$ with simple symmetric random walks.
\begin{enumerate}
 \item If $p \geq 1/2$ then  $\bE[\tau] = \infty$ with $\bE[\min\{\tau,t\}] = (2p-1)t + o(t).$
 \item If $p < 1/2$ then $\tau$ is almost surely finite. Moreover, if $p < (256d^6e^2)^{-1}$ then $\bE[\tau] < \infty.$
\end{enumerate}
\end{thm}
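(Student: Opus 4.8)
The plan is to treat the two regimes by quite different methods: Part~1 by a box-counting (``mass transport'') argument exploiting that parked cars occupy \emph{distinct} spaces, and Part~2 by a soft argument for almost-sure finiteness together with a perturbative (small-$p$) argument for integrability of $\tau$.

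For Part~1, write $f(t)=\bP(\tau>t)$, which by translation invariance equals $\bP(\tau^v>t)$ for every $v$, so that $\bE[\tau]=\sum_{t\ge0}f(t)$ and $\bE[\min\{\tau,t\}]=\sum_{s=0}^{t-1}f(s)$. Fix $t$, work in the cube $B_n=\{-n,\dots,n\}^d$, and use that a car moves at most one step per unit time: every car starting in $B_{n-t}$ that has parked by time $t$ sits on a space in $B_n$, and distinct parked cars sit on distinct spaces. Since the expected number of cars starting in $B_{n-t}$ that are still moving at time $t$ is exactly $f(t)\,|B_{n-t}|$, taking expectations gives
\[
\bigl(p-f(t)\bigr)\,|B_{n-t}| \;\le\; \bE\bigl[\#\{\text{spaces in }B_n\}\bigr]\;=\;(1-p)\,|B_n| .
\]
Dividing by $|B_n|$ and letting $n\to\infty$ (with $t$ fixed, so $|B_{n-t}|/|B_n|\to1$) yields $f(t)\ge 2p-1$ for every $t$, hence $\bE[\tau]=\sum_t f(t)=\infty$ as soon as $p>1/2$. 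Conversely, every space in $B_{n-t}$ that is occupied by time $t$ is occupied by a car that started in $B_n$; running the same computation gives $f(t)\le 2p-1+e(t)$, where $e(t)$ is $(1-p)$ minus the density of spaces occupied by time $t$. A short lemma --- that every space is a.s.\ eventually filled when $p\ge1/2$, via recurrence of the walk in $d\le2$ and return-probability estimates in $d\ge3$ --- shows $e(t)\to0$, and then $\bE[\min\{\tau,t\}]=\sum_{s<t}f(s)=(2p-1)t+o(t)$ by Ces\`aro averaging. The endpoint $p=1/2$ needs a genuinely different input for $\bE[\tau]=\infty$, since there $f(t)\ge 2p-1=0$ is vacuous: one uses that the discrepancy $\#\{\text{cars in }B_n\}-\#\{\text{spaces in }B_n\}$ has mean $0$ and standard deviation of order $n^{d/2}$, so with probability bounded away from $0$ a cube of side $n$ carries an excess of order $n^{d/2}$ cars over spaces, and these excess cars cannot all park in (or just outside) the cube within any bounded time, forcing $\sum_t f(t)=\infty$. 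This is essentially the mechanism behind the non-existence of finite-mean translation-invariant matchings of equal-density point processes in low dimensions.

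For Part~2, almost-sure finiteness for every $p<1/2$ is the soft half: if $\bP(\tau=\infty)=q>0$ then by translation invariance a density-$q$ set of cars never parks, so at every large time each of these cars sits on a site that is not an empty space; comparing the density of never-empty sites with the densities of initial cars and of spaces --- the latter being the strictly larger $1-p>p$ --- gives a contradiction, so $q=0$. The quantitative bound $\bE[\tau]<\infty$ for $p<(256d^6e^2)^{-1}$ is a perturbative estimate: if $\tau>t$, the origin car has taken $t$ steps and each visited site was ``blocked'' (a car, or a space already taken) at the moment of the visit, and attributing each blockage recursively to the car responsible for it produces a connected ``obstruction cluster'' of sites that grows with $t$ and is rooted near the origin. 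One bounds the expected number of such clusters on $k$ sites by $(Cd\,p)^k$ for an explicit constant $C$ --- using at most $(Cd)^k$ for the number of lattice animals of size $k$ through a fixed point, and standard heat-kernel and return-probability estimates for the walks that did the filling --- so that for $p$ below the stated threshold this geometric series converges, $\bP(\tau>t)$ decays geometrically in $t$, and $\bE[\tau]<\infty$.

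I expect the main obstacle to be the design and analysis of the obstruction cluster in Part~2: one needs a combinatorial certificate that is genuinely \emph{forced} by $\{\tau>t\}$, is anchored near the origin, has size growing linearly in $t$, and whose expected count is exponentially small in its size with a constant one can actually track through the random-walk estimates --- it is precisely this bookkeeping that produces, and confines one to, a threshold of the shape $(256d^6e^2)^{-1}$. In Part~1 the analogous difficulty is the $p=1/2$ case, where one must convert the $n^{d/2}$ discrepancy fluctuations into divergence of $\sum_t f(t)$ without the clean translation-invariance identity available when $p>1/2$.
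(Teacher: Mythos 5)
First, a remark on scope: Theorem \ref{thm:DGJLS} is not proved in this paper at all --- it is stated as a quotation from Damron, Gravner, Junge, Lyu and Sivakoff \cite{DGJLS} --- so there is no in-paper argument to compare yours against, and I am assessing your proposal on its own terms. The $p>1/2$ half of Part~1 is sound: the inequality $(p-f(t))|B_{n-t}|\le(1-p)|B_n|$ is correct (distinct parked cars from $B_{n-t}$ occupy distinct spaces of $B_n$), it gives $f(t)\ge 2p-1$ and hence $\bE[\tau]=\sum_t f(t)=\infty$ for $p>1/2$ in every dimension, and the Ces\`aro step is fine. But the matching upper bound on $f(t)$ hinges entirely on your ``short lemma'' that every space is a.s.\ eventually filled, and for $d\ge3$ this is not a short lemma: a single transient walk need not visit a prescribed never-filled space, and the mere coexistence of a positive density of forever-unparked cars with a positive density of forever-empty spaces is not, by itself, contradictory. ``Return-probability estimates'' is a placeholder, not an argument; this is exactly where \cite{DGJLS} do genuine work.

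The remaining gaps are more serious. (i) Your mechanism for $\bE[\tau]=\infty$ at $p=1/2$ only works for $d=1$: the excess of cars over spaces in $B_n$ is of order $n^{d/2}$, but by time $t$ those cars have access to the spaces of $B_{n+t}$, and the annulus $B_{n+t}\setminus B_n$ contains of order $t\,n^{d-1}$ sites, hence typically of order $t\,n^{d-1}$ spare spaces; the excess survives only when $n^{d/2}\gg t\,n^{d-1}$, i.e.\ $t\ll n^{1-d/2}$, which is vacuous for $d\ge 2$. (For $d=1$ it does give $f(t)\gtrsim 1/t$ upon taking $n\asymp t^2$, which is the right heuristic.) (ii) The ``soft half'' of Part~2 is not a proof as written: at every time there is a positive density of unparked cars and, since $p<1/2$, a positive density of unfilled spaces, and a pure density comparison does not preclude the unparked cars from forever sitting on blocked sites. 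Recurrence closes this in $d\le2$ (a forever-unparked car visits every site, in particular some forever-empty space, where the greedy rule would force it to park), but for $d\ge3$ an actual argument is required and you do not supply one. (iii) The small-$p$ integrability claim is, as you yourself acknowledge, only a plan: the obstruction cluster is never defined, so neither its anchoring near the origin, nor its linear-in-$t$ size, nor the $(Cdp)^k$ bound on its expected count is established. In sum, the only portions genuinely proved are $\bE[\tau]=\infty$ for $p>1/2$ (all $d$) and the full Part~1 asymptotics for $p>1/2$ in $d\le2$; the rest of the theorem remains open in your write-up.
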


For $p > 1/2,$ Theorem \ref{thm:DGJLS} gives good asymptotics for $\bE[\min\{\tau,t\}].$ However, for $p = 1/2$ Theorem \ref{thm:DGJLS} only tells us that $\bE[\min\{\tau,t\}]$ is $o(t),$ while following the seminar by \cite{Junge-seminar} we know that the authors of \cite{DGJLS} conjecture that for $d=1$ and $p=1/2$ we have $\bE[\min\{\tau,t\}] = \Theta(t^{3/4})$. Moreover, for $d=1$,  Theorem \ref{thm:DGJLS} only gives $\bE[\tau] < \infty$ for $p < 0.000528,$ while it is conjectured that this holds for all $p < 1/2.$

Here, we address both conjectures of the authors of \cite{DGJLS} when $d=1,$ and prove the following two theorems. The first considers the parking problem on $\bZ$ with $p=1/2$ where we give strong bounds on the asymptotic growth of $\bE[\min\{\tau,t\}]$ by showing that it indeed equals $t^{3/4}$ up to a fractional power of $\log t.$

\begin{thm}\label{thm:p=1/2}
For the parking problem on $\bZ,$ when $p=1/2,$ there exist constants $C, c > 0$ such that
\[
 c t^{3/4} (\log t)^{-1/4} \leq \bE[\min\{\tau,t\}] \leq C t^{3/4}.
\]
\end{thm}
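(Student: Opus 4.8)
The plan is to prove both inequalities by estimating the tail $\mathbb{P}(\tau>s)$ and then summing. Since $\min\{\tau,t\}=\sum_{s=0}^{t-1}\mathbf{1}_{\{\tau>s\}}$ we have $\mathbb{E}[\min\{\tau,t\}]=\sum_{s=0}^{t-1}\mathbb{P}(\tau>s)$; equivalently, by translation invariance $\mathbb{P}(\tau>s)$ is the spatial density of still-moving cars at time $s$, and the sum above is comparable to $\mathbb{E}[V^0(t)]$. The target estimate is $\mathbb{P}(\tau>s)\asymp s^{-1/4}$, up to polylogarithmic corrections; summing $s^{-1/4}$ over $s<t$ gives $\Theta(t^{3/4})$. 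The exponent $1/4$ is explained by two facts: by time $s$ the car at the origin has performed a simple random walk whose range is an interval of length of order $\sqrt s$; and in the i.i.d.\ fair environment the surplus of parking spaces over cars in an interval of length $\ell$ is of order $\sqrt\ell$ and oscillates in sign, so whether the car can already have parked is governed by a random-walk (ballot-type) event on the spatial scale $\sqrt s$, of probability of order $(\sqrt s)^{-1/2}=s^{-1/4}$.

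\emph{Upper bound.} It suffices to prove $\mathbb{P}(\tau>s)\le C s^{-1/4}$ for every $s\ge1$, since then $\mathbb{E}[\min\{\tau,t\}]\le C^4+C\sum_{s<t}s^{-1/4}\le C't^{3/4}$. The difficulty in bounding $\mathbb{P}(\tau>s)$ directly is that a parking space near the origin may be claimed by another car before our car reaches it, so one cannot naively assert that our car has parked as soon as its walk has explored an interval containing a surplus of spaces over cars. I would handle this using the coupling techniques of Section~\ref{sec:definitions}: the space-based parking model together with a suitable parking strategy give a comparison of the original process with a model in which parking is controlled well enough to reduce the estimate to a computation about simple random walk in the i.i.d.\ environment. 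The essential random-walk input is the ballot estimate, that the partial-sum walk of the car/space indicators started at the origin stays on one side of $0$ for $m$ consecutive steps with probability $\Theta(m^{-1/2})$; applying this on the scale $m\asymp\sqrt s$ set by the range of the origin car's walk by time $s$ yields $\mathbb{P}(\tau>s)=O(s^{-1/4})$.

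\emph{Lower bound.} Here the goal is $\mathbb{P}(\tau>s)\ge c\,s^{-1/4}(\log s)^{-1/4}$ for all large $s$, which by the same summation gives $\mathbb{E}[\min\{\tau,t\}]\ge c(\log t)^{-1/4}\sum_{s\le t}s^{-1/4}\ge c\,t^{3/4}(\log t)^{-1/4}$. I would exhibit an event of probability $\Omega\bigl((s\log s)^{-1/4}\bigr)$ on which the origin's car is certainly still moving at time $s$. Take $\ell=\lfloor c_0\sqrt{s\log s}\rfloor$ and consider $I=[-\ell,\ell]$: with probability of order $\ell^{-1/2}=\Theta\bigl((s\log s)^{-1/4}\bigr)$, by the ballot estimate, the discrepancy walk of the car/space indicators stays positive across $I$, which through the interaction of the cars leaves the origin's car with no accessible vacancy within $I$; and on this event the origin's car cannot have escaped $I$ by time $s$, since a simple random walk started in the bulk $[-\ell/2,\ell/2]$ leaves $I$ by time $s$ only with probability $e^{-\Omega(\ell^2/s)}=e^{-\Omega(\log s)}\le s^{-2}$ once $c_0$ is large. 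Converting this into a statement about the single car at the origin, rather than about the density of moving cars in $I$, is exactly where the car-removal strategy of Section~\ref{sec:definitions} enters; and the enlargement of the trapping window from the natural scale $\sqrt s$ to the scale $\sqrt{s\log s}$, needed so that escape is ruled out, is the source of the polylogarithmic loss.

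\emph{Main obstacle.} The substantive difficulty is the upper bound: making rigorous the statement that the origin's car parks soon after its walk has explored an interval with a surplus of spaces, in the face of the other cars competing for exactly those spaces. This is precisely the role of the space-based parking model and the parking strategies — they replace the time-dependent competition between cars by a static, monotone comparison, after which only classical random-walk and ballot estimates remain. The lower bound is more routine; its only genuine subtlety, the passage from a statement about many cars to one about the single car at the origin, is handled by the car-removal strategy at the cost of the stated polylogarithmic factor.
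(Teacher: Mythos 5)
Your high-level picture matches the paper's --- the $s^{-1/4}$ tail heuristic, gambler's-ruin/ballot scales, parking strategies for the upper bound, and car removal plus a $\sqrt{t\log t}$ window for the lower bound --- but in both halves the step that actually carries the proof is missing. For the upper bound, writing that ``a suitable parking strategy \dots reduces the estimate to a computation about simple random walk'' names the difficulty without resolving it. The paper's entire Section \ref{sec:p=1/2upper} is the construction of that strategy: a right-to-left queueing algorithm on blocks of length $\lceil\sqrt t\rceil$ that pre-assigns to each car a space at distance at most $3\lceil t^{1/4}\rceil$ to its left, abandoning a car whenever the queue exceeds $\nu=\lceil t^{1/4}\rceil$. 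The bound $O(t^{3/4})$ then comes from two separate computations, neither of which is your ballot estimate on scale $\sqrt s$: the stationary distribution of the queue-length Markov chain gives $\bP[P(0)=\star]=O(t^{-1/4})$ (contributing $t\cdot t^{-1/4}$), and Lemma \ref{lbeforer} gives $\bP[H_b<H_{-a}]=O(t^{-1/4})$ and $\bE[H_{-a}\mid H_{-a}<H_b]=O(ab)=O(t^{3/4})$ with $a=O(t^{1/4})$ and $b=O(\sqrt t)$. The cutoff $\nu=t^{1/4}$, which balances these contributions, is the key design decision and does not appear in your sketch. Note also that Theorem \ref{thm:supermarket} only yields a comparison of expectations (via Lemma \ref{lem:dual}, and only for weakly translation invariant strategies, which is why the paper randomly shifts its blocks); it does not give a pointwise comparison of $\bP(\tau>s)$, so your plan to bound the tail directly by a strategy comparison needs the extra, though easy, step $\bP(\tau>s)\le 2s^{-1}\bE[\tau\wedge s]$.

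For the lower bound, the assertion that the ballot event on the initial car/space configuration ``leaves the origin's car with no accessible vacancy'' is not justified, and I do not believe it is true as stated: a surplus of cars over spaces in $[-\ell,\ell]$ guarantees that some cars fail to park there, but the random walks of the other cars determine which spaces are filled and when, and nothing prevents the origin's car from stepping onto a still-empty space. Your sketch is also internally inconsistent on this point: if the event really forced the origin's car to survive you would need no ``conversion to the single car,'' and car removal does not perform that conversion in any case. The paper's route is different and avoids the issue: it shows that with probability bounded away from zero (not $\ell^{-1/2}$) there is an excess of order $(t\log t)^{1/4}$ unparkable cars in a window of width $\Theta(\sqrt{t\log t})$ --- using Theorem \ref{thm:pinkfloyd} to confine the process to the window, the swapping construction of Definition \ref{con:modify} together with Lemma \ref{claim:lemming} to bound the relief provided by the flanking intervals, and Lemma \ref{minssrw} to rule out escape --- and then converts a density $(t\log t)^{-1/4}$ of surviving cars into $\bE[\tau\wedge t]\ge \Omega\bigl(t\cdot(t\log t)^{-1/4}\bigr)$ by translation invariance. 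To complete your version you would need either a genuine proof that your ballot event traps the origin's car specifically, or to switch to this counting-and-averaging argument.
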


\begin{rem}\label{boe}
Through the expression
	\begin{align*}
		\bE[\min\{\tau,t\}] = \sum_{s=0}^{t-1}\bP[\tau>s],
	\end{align*}
Theorem \ref{thm:p=1/2} gives bounds for the tail $\bP[\tau > t]$, i.e. Theorem \ref{thm:p=1/2} implies the existence of constants $C',c'>0$ such that
\[
c' t^{-1/4} (\log t)^{-1/4} \leq \bP[\tau > t] \leq C' t^{-1/4}.
\]
This interpretation additionally allows us to present some heuristics for the conjectured exponent $3/4$.

Consider an interval $[-C t^{1/2}, C t^{1/2}]$ around 0, for some $C > 0$ large. By the properties of the simple symmetric random walk, we would not expect too many cars starting in that interval to exit it by time $t$. By the properties of the binomial distribution, with some uniformly positive probability we can also expect at least $C t^{1/2} + t^{1/4}$ cars to start in this interval. Since every car that parks occupies exactly one parking space, and we only have $C t^{1/2} - t^{1/4}$ such spaces to start with, that surplus of $2t^{1/4}$ cars will not find a parking space by time $t$, consequently suggesting that $\bP[\tau>t] \approx t^{1/4} / t^{1/2} = t^{-1/4}$, and this corresponds to an approximate guess of $\bE[\min\{\tau,t\}] \approx \Theta(t^{3/4})$. In the proof of the lower bound in Theorem \ref{thm:p=1/2} we will follow a similar line of thought, filling in the gaps left in the intuitions above.
\end{rem}

The second theorem considers the parking problem on $\bZ$ with $p<1/2$ where we confirm that the expected journey length of a car is finite as predicted. \cite{DGJLS} also ask whether (for a large family of parking processes)  there is a critical exponent $\gamma >0$ such that, for some constant $C > 0$, $\bE[\tau] \sim C\left(1/2 -p\right)^{-\gamma}$ as $p$ increases to $1/2.$ For the parking problem on $\bZ$, we have a partial result in this direction.
\begin{thm}\label{thm:p<1/2}
For the parking problem on $\bZ,$ when $p<1/2$ we have $\bE[\tau] < \infty.$ Moreover $\bE[\tau] = O\left((1/2 - p)^{-6}\right)$ as $p\nearrow1/2.$
\end{thm}

We remark that since the completion of this work, \cite{JJLS} showed that the critical exponent is $-3+o(1)$ (see Theorems 3 and 4 in \cite{JJLS}) for the continuous time-base parking problem where cars move at exponentially distributed times. The proof they give involves a coupling where they release cars one-by-one, seeing if they park within the first $T$ moves, and finally asserting that it takes on average time $T$ for a car to move $T$ times. As such, their proof also applies to the discrete time-base parking problem analysed in this paper with the last step no longer necessary.

In this paper, we will consider strategies that modify the car-parking process. 
We will introduce two types of strategy: {\em parking strategies} where we allow cars to choose whether or not to park in an available space, and {\em car removal strategies} where we remove cars from the parking process (we defer formal definitions to Section \ref{sec:definitions}). In each case the strategies will be previsible in the sense that no future information may be used when choosing whether or not a car parks at a particular point in time. For a parking triple $(L,R,\mu)$ and a strategy $S$, we will write $V^v_S(t)$ for the value of $V^v(t)$ when strategy $S$ is followed, and similarly $\tau^v_S$; we write $G$ for the greedy strategy (i.e.~the original process).

The key properties of parking and car removal strategies that we shall use are given in the following theorems, which show that no parking strategy is quicker than the greedy one, and that adding car removal makes parking easier. 
We note that these results hold in the more general setting of Cayley graphs.

\begin{thm}\label{thm:supermarket}
Let $S$ be a parking strategy on the parking triple $(L,R,\mu)$. Then for all $t, k \ge 0$ and vertices $v$,
\[
\bP\left[V^v_G(t) \le k\right] \ge \bP\left[V^v_S(t) \le k\right].
\]
\end{thm}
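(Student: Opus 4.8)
\textbf{Proof proposal for Theorem \ref{thm:supermarket}.}

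The plan is to couple the greedy process $G$ with the strategy process $S$ on the same probability space --- same initial car/space configuration and same sequence of $\mu$-increments driving the walks --- and to show that in this coupling $V^v_S(t) \le V^v_G(t)$ for every $t$ and every vertex $v$. Since a pointwise inequality of nonnegative integer random variables implies the corresponding stochastic domination (and the event $\{V^v_S(t)\le k\}$ contains $\{V^v_G(t)\le k\}$), this yields $\bP[V^v_G(t)\le k]\le\bP[V^v_S(t)\le k]$ --- wait, that is the wrong direction. The correct reading is the opposite: to prove $\bP[V^v_G(t)\le k]\ge\bP[V^v_S(t)\le k]$ we need $V^v_G(t)\le V^v_S(t)$ pointwise in the coupling, i.e.\ the greedy process sees \emph{fewer} visits at every vertex. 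This is the natural statement: greedy parking is the most aggressive at removing moving cars from circulation as early as possible, so it should minimise congestion everywhere. So the goal of the coupling is $V^v_G(t)\le V^v_S(t)$ for all $v,t$.

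The heart of the argument is a structural lemma about how the two processes differ. Run both with the same randomness. The difference between $S$ and $G$ at any moment is that $S$ may decline to park a car in an available space that $G$ would have used. I would maintain, by induction on time $t$, an injection $\phi_t$ from the set of (moving-car, position, time) triples counted by $G$ up to time $t$ into those counted by $S$, together with a bijective correspondence between ``$G$-cars still moving at time $t$'' (plus cars parked by $G$ in spaces that $S$ kept free) and a subset of ``$S$-cars still moving at time $t$''. Concretely: whenever $S$ leaves free a space $w$ that $G$ filled with some car, there is, under $S$, a car that is ``extra'' relative to $G$; I want to track each $G$-car to an $S$-car (or an $S$-parked-car event) such that the $S$-partner is always at the same location or ``behind'' in a way that ensures every visit of a $G$-car to $v$ is matched by a distinct visit of its $S$-partner to $v$. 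The one-dimensional structure is helpful but the argument should be phrased combinatorially so it works on any Cayley graph: the key invariant is that the multiset of occupied-by-moving-car positions under $G$ is, at every time, ``dominated'' by that under $S$ in the sense that there is a system of distinct representatives pairing $G$-moving-cars with $S$-moving-cars at equal positions, and that $S$ has at least one free space wherever $G$ does (hence a newly arriving $G$-car that parks has its $S$-partner either also park --- consuming the pairing --- or keep moving, in which case the free space is still available for later matching). Establishing that this invariant is preserved through a single time step --- in particular handling the tie-breaking rule when several cars land on one free space, and checking that declining to park under $S$ never \emph{destroys} a pairing but only possibly leaves a partner moving --- is the main obstacle, and is where the bulk of the careful bookkeeping lies.

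Once the invariant ``every $G$-moving-car at time $s$ is paired with a distinct $S$-moving-car at the same position, and the pairing is consistent over time (a $G$-car and its current $S$-partner follow, from the moment of pairing onward, the same future increments until the $G$-car parks)'' is in place, the visit comparison is immediate: every triple $(u,s)$ with $G$-car $u$ at $v$ at time $s$ maps to the triple $(u',s)$ where $u'$ is $u$'s current $S$-partner, which is also at $v$ at time $s$; distinctness of partners makes this map injective, and it also respects the indicator term for $v$ itself being initially a car (the configurations agree at time $0$). Hence $V^v_G(t)\le V^v_S(t)$ deterministically, giving the theorem. A clean way to organise the induction is to note that $S$ and $G$ agree until the first time $S$ declines a park that $G$ accepts; at that instant we create one new pairing obligation (the $G$-car that parked versus the $S$-car that didn't), and thereafter we only need to check that subsequent declines and subsequent parkings maintain the system of distinct representatives --- parkings under $G$ can only \emph{reduce} the set of $G$-moving-cars, which only makes the matching condition easier, while the tie-break rule is handled by noting that if $j$ cars of $G$ reach a common free space then at least $j-1$ of their $S$-partners are still moving (since at most one $S$-partner can have parked there) and can be re-paired among the surviving free spaces guaranteed by the ``$S$ has $\ge$ free spaces'' half of the invariant. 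I would state the invariant precisely as a lemma and prove its preservation under the three elementary events (a batch of cars moving one step, a subset of them parking under $G$, a possibly-smaller subset parking under $S$), then conclude.
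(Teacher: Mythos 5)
Your high-level goal is the right one (after you correct the direction): exhibit a coupling in which $V^v_G(t)\le V^v_S(t)$ pointwise. But the coupling you propose --- same initial configuration, same increments, with each car carrying its own walk --- cannot support the invariant you need, and in fact the pointwise inequality is simply false for that coupling. Concretely, on $\bZ$ suppose cars $a$ and $b$ both reach a free space $w$ at time $1$, and under $G$ the tie-break parks $a$ while under $S$ the strategy declines $a$ and parks $b$. Under $G$, car $b$ drives on and its walk $X^b$ visits some set of vertices; under $S$, car $b$ is parked and visits nothing further, while the ``extra'' moving car is $a$, whose walk $X^a$ is independent of $X^b$. With positive probability $X^b$ visits a vertex at a time when $X^a$ does not, so $V^v_G(t)>V^v_S(t)$ there. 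Your invariant tries to paper over exactly this by demanding that a $G$-car and its current $S$-partner ``follow the same future increments from the moment of pairing onward'' --- but in the car-based model the increments belong to the cars, not to the pairing, so enforcing this is not a coupling of the two given processes: it silently replaces the $S$-process by a different one, and you would owe a proof that the replacement has the same law. That justification is the genuinely missing idea.

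The paper supplies it by changing the model rather than the pairing: in the \emph{space-based} model (Definition \ref{defn:space parking}) the increments $E^v_n$ are stored at the vertices and consumed in order by whichever unparked cars happen to pass through, so the identity of the car that continues from $w$ is irrelevant --- only the count matters. Proposition \ref{prop:sameOld} shows this model is stochastically equivalent to the car-based one, and Theorem \ref{thm:supermarketSpaces} then proves the deterministic inequality $V^v_{\tilde S}(t)\ge V^v_{\tilde G}(t)$ by a short induction on per-vertex, per-direction counts $T^{v,r^{-1}}(t)$ --- no system of distinct representatives, no tie-break bookkeeping, because nothing depends on which car does what. If you want to salvage your write-up, the fix is to (i) state and prove the equivalence of the space-based model with the original, and (ii) run your induction on counts in that model; as it stands, the ``main obstacle'' you defer to careful bookkeeping is not a bookkeeping problem but a counterexample.
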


\begin{thm}
\label{thm:pinkfloyd}
Let $Q$ be a car removal strategy on the parking triple $(L,R,\mu)$. Then $\tau^v_Q \le \tau^v_G$, and for all $t \ge 0$ we have $V^v_Q(t) \le V^v_G(t)$.
\end{thm}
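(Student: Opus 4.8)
The plan is to set up an explicit coupling between the greedy process $G$ and the car-removal process $Q$ on the same probability space, using the same initial configuration (cars/spaces) and the same family of i.i.d.\ random walk increments for each car. The idea is that a car-removal strategy only ever \emph{deletes} cars, so intuitively at every vertex and every time there are ``at least as many empty spaces'' available to the cars that survive in $Q$ as there are in $G$; hence every surviving car in $Q$ parks no later than it does in $G$. To make this precise I would show, by induction on time $t$, that the coupling can be arranged so that the following monotonicity invariant holds: the set of occupied-by-a-parked-car vertices in $Q$ at time $t$ is contained in the corresponding set in $G$ at time $t$, \emph{and} each car that is still driving in $Q$ at time $t$ is at the same position as it is in $G$ at time $t$ (until it parks in one of the two processes). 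The base case $t=0$ is immediate.

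For the inductive step, I would process the (at most countably many) cars that attempt to move or park at the transition from time $t$ to $t+1$. Consider a car $u$ that is still active in $Q$ at time $t$. By the invariant, in $G$ it is at the same position, and it takes the same step (same increment). If in $G$ the car $u$ parks at this step — i.e.\ it lands on a vertex $w$ that is a space and is not claimed by another parking car under $G$'s tie-break rule — then $w$ was a space in $Q$ as well (since the parked-car set in $Q$ is contained in that of $G$), so $u$ either parks at $w$ in $Q$ too, or some other car parks there first under $Q$; either way the parked-car set of $Q$ gains at most the vertex $w$, which $G$ also gains, preserving the containment. If $u$ does \emph{not} park at this step in $G$, then in $Q$ it may or may not park, but in the former case it parks on a vertex that is a space in $Q$, and I must check this vertex is also a space in $G$ at that time step: this follows because any vertex occupied in $G$ is occupied by a car that, by the invariant, has the same trajectory in $Q$ unless it was removed — and a removed car frees up (rather than occupies) a vertex, so there is no vertex occupied in $G$ but free in $Q$ into which a $Q$-car could illegitimately park. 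Thus after processing all cars, the containment of parked-car sets is maintained, active cars in $Q$ remain synchronised with $G$, and cars that were removed from $Q$ simply drop out of the bookkeeping. The conclusions $\tau^v_Q \le \tau^v_G$ and $V^v_Q(t) \le V^v_G(t)$ then follow: a car present and never removed in $Q$ parks no later than in $G$, so its journey is a prefix of its $G$-journey, giving both the stopping-time inequality and (summing the prefix-containment over all cars, plus the indicator term) the visit-count inequality.

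The step I expect to be the main obstacle is handling simultaneous arrivals and the tie-break rule carefully enough that the containment invariant is genuinely preserved: when several cars land on the same empty vertex at the same time, $G$ and $Q$ may have different sets of competitors present (because $Q$ has removed some), so the car that ``wins'' the space can differ between the two processes. I would argue that this is harmless for the invariant — whichever car parks, the \emph{vertex} becomes occupied in $Q$ only if it was available, and it is simultaneously becoming occupied in $G$ (or was already occupied there), so the set-containment is what matters, not the identity of the parked car. A secondary technical point is that a removal strategy is allowed to be adaptive (depending on the history), but since the coupling feeds both processes the same randomness and the removal decisions in $Q$ are measurable with respect to $Q$'s own history, no circularity arises; one just has to state the coupling construction in the right filtration, which I would do by defining everything jointly as a deterministic function of the initial configuration and the increment sequences.
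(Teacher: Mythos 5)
Your overall strategy --- couple $G$ and $Q$ on the same randomness and run an induction on time maintaining (i) containment of the occupied-vertex sets and (ii) synchronisation of the cars still active under $Q$ with their $G$-counterparts --- is the same as the paper's, which packages the combined invariant as $W^w_Q(t) \subseteq W^w_G(t)$, where $W^w(t)$ denotes the set of unparked cars at position $w$ at time $t$. However, your treatment of the tie-break, which you correctly flag as the main obstacle, does not close the induction. You allow the possibility that a car $u$ which parks at $w$ under $G$ at time $t+1$ loses the space to ``some other car'' under $Q$ and keeps driving, and you claim this is harmless because only the set of occupied vertices matters. It is not harmless: if that scenario occurred, $u$ would be parked under $G$ but still driving under $Q$, so $\tau^u_Q > \tau^u_G$, directly contradicting the first assertion of the theorem, and $u$'s extra wandering under $Q$ could also inflate $V^v_Q$ above $V^v_G$. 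Moreover your synchronisation invariant would itself break at the next step, since $u$ no longer moves under $G$. The conclusions you want really do depend on the identity of the parked car, not just on which vertices get filled.

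The missing observation, which is exactly what the paper supplies, is that this bad scenario cannot occur: by the inductive hypothesis the set of cars arriving at a free space $w$ at time $t+1$ under $Q$ is a \emph{subset} of the set arriving under $G$, and the tie-break selects the arriving car with minimal $U^x_{t+1}$. If $u$ is the minimiser over the larger ($G$) set of arrivals and $u$ arrives under $Q$, then $u$ is also the minimiser over the smaller ($Q$) set, so $u$ parks at $w$ under $Q$ as well. (The reverse discrepancy --- $u$ winning under $Q$ while losing under $G$ --- is possible but benign, since it only makes $u$ park earlier under $Q$ and such a car then simply leaves the invariant's scope.) With this one extra step your induction closes, and the rest of your argument, including the derivation of $\tau^v_Q \le \tau^v_G$ and $V^v_Q(t) \le V^v_G(t)$ from the invariant, matches the paper. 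A minor secondary point: in the case where $u$ parks under $Q$ but not under $G$, the check you need is not that the target vertex is a space in $G$ (it is, since the car/space status is shared), but that it is or becomes occupied under $G$ by time $t+1$; this follows because $u$ also arrives there under $G$, so either the space was already filled under $G$ or some arriving car fills it at that step.
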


In order to prove Theorem \ref{thm:supermarket} we introduce a new construction of the model, in which the cars follow directions stored at the vertices they visit, rather than their own individual random walks.  We will refer to this as the {\em space-based model}, in contrast to the {\em car-based model} described above.  Even though the stochastic properties of the two models are equivalent, the new model allows us to control the quantity $V^v(t)$ better, and we are then able to easily deduce the desired result for the original parking problem.

The paper is organised as follows. In Section \ref{sec:definitions} we define the parking processes, introduce the notions of parking strategies and car removal, and prove Theorems \ref{thm:supermarket} and \ref{thm:pinkfloyd}. This allows us to consider both more and less restrictive parking problems, which we use in our arguments. In Section \ref{sec:probabilities} we recall some known probability bounds that are used in this paper. In Section \ref{sec:p=1/2upper} we prove the upper bound on $\bE[\min\{\tau,t\}]$ in Theorem \ref{thm:p=1/2}, and in Section \ref{sec:p=1/2lower} we prove the lower bound. In Section \ref{sec:p<1/2} we prove Theorem \ref{thm:p<1/2}. Finally in Section \ref{yaymorewaffle} we conclude the paper with some related problems and open questions.

Throughout this paper, we use the notation $a \wedge b = \min\{a,b\}.$ For a normally distributed random variable $Z$ with mean $0$ and variance $1,$ we write $\Phi(x) = \bP\left[Z \le x\right].$

\section{Model specifics, parking strategies, and car removal}
\label{sec:definitions}

We will want to consider slight modifications of the original parking problem on $\bZ.$ In this section, we introduce new notation for these modifications and also compare these modifications to the original problem. The first modification is the addition of \emph{parking strategies}. The second is the addition of \emph{car removal} to the process. We compare the expected journey length of a car by time $t,$ showing that non-trivial parking strategies increase expected journey times while car removal decreases them. In fact, we are able to show that these bounds hold for any parking triple.

\subsection{The car-based parking model}

Let us recall some definitions.
Let $H$ be a group and $R$ be a generating set for $H$. The \em Cayley graph \em of $H$ with respect to $R$ is the edge-coloured directed graph $L = (H,E)$ where $$E := \{(h,hr) : h \in H, r \in R\},$$ and the edge $(h,hr)$ is coloured $r$.
Note that if $R$ is closed under taking inverses then $(x,y) \in E$ if and only if $(y,x) \in E,$ and so we can just consider the underlying graph. For example, the $d$-dimensional integer-lattice $\bZ^d$ can be thought of as the abelian group with generating set $\{e_1,-e_1,\ldots,e_d,-e_d\} \subset \bZ^d$ where the $i$-th co-ordinate of $e_i$ is $1$ and all others are $0$.

A {\em parking triple} is an ordered triple $(L,R,\mu)$ , where
$L=(V,E)$ is a Cayley graph on a group $V$ with generating set $R$ and $\mu$ is a probability distribution on $R$ 
 (In later sections we will be interested in the parking problem on $\bZ$, namely the parking triple $(\bZ,\{-1,+1\},\mu^{\bZ})$ where $\mu^{\bZ}(-1) = \mu^{\bZ}(+1) = 1/2$.  However, the results in this section hold in the more general model.)

We define the parking problem on the parking triple $(L,R,\mu)$ as follows.

\begin{defn}\label{defn:free parking}
Independently for each vertex $v \in V$, let: 
\begin{itemize}
\item $X^v = (X^v_0,X^v_1,\ldots)$ be a Markov chain on $L$ with $X^v_0=v$, and transition matrix $(p_{u,w})$ where $p_{u,ur} = \mu(r)$ for each $u\in V$ and $r\in R$, and $p_{u,w} = 0$ otherwise.

\item $(U^v_s)_{s \in \bN}$ be a sequence of independent $\mathrm{Unif}([0,1])$ random variables. 

\item $B^v$ be a $\mbox{Bernoulli}(p)$ random variable. We initially place a car at $v$ when $B^v = 1$ and otherwise a parking space with the capacity for one car.
\end{itemize}

A car starting at vertex $v$ moves according to the Markov chain $X^v$ until it finds a free parking space and parks there. (We do not use the random walks $X^v$ for those $v$ where we initially place a parking space; we define them just for the simplicity of the model.) If cars $v_1,\ldots,v_k$ all arrive at the same free parking space at time $s,$ we park car $v_j$ with smallest $U^{v_j}_s.$
\end{defn}

We shall sometimes refer to the model in Definition \ref{defn:free parking} as the \emph{car-based parking model}. We remark that we generate new independent tie-splitting values (the $(U^v_s)_{v\in V}$) for each $s$ to maintain fairness. Indeed, had we relied on a single value $U_v$ for a car $v$ throughout the whole history of the parking process, the cars that had encountered more cars and lost the tie-splits initially would be more likely to keep losing them, and consequently not parking, later in the process.

Let $(\Omega,\cF,\bP)$ be a probability space. A \emph{filtration} is a sequence $\cF_0 \subseteq \cF_{1} \subseteq  \ldots$ of $\sigma$-algebras. A random variable $\tau : \omega \rightarrow \bN$ is a \emph{stopping time} with respect to a filtration $(\cF_{t})^{\infty}_{t=0}$ if $\tau^{-1}(\{t\}) \in \cF_{t}$ for each $t \in \bN.$  In the car-based model, for the parking problem on the parking triple $(L,R,\mu)$, we consider the probability space $(\Omega,\cF,(\cF_t)_{t\ge0},\bP)$, where an elementary event $\omega \in \Omega$ is of the form $\omega = \left((B^v)_{v\in V}, (X^v_s)_{v\in V, s\in \bN},(U^v_s)_{v\in V,s\in \bN}\right)$, and the filtration $\left(\cF_t\right)_{t \ge 0}$ is defined by
\[
\cF_t = \sigma((B^v)_{v \in V}, (X^v_s)_{v \in V,0 \leq s \leq t}, (U^v_s)_{v \in V,1 \leq s \leq t})
\]
for all $t \geq 0.$

\subsection{Parking strategies and the space-based model}

In the model we have defined, all cars try to park as soon as they reach a free parking space. This can be thought of as a \em parking strategy\em. Let $G$ denote this ``greedy" parking strategy: a car parks as soon as it can. It will be useful to consider different (possibly random) parking strategies as a way of controlling where cars park. In the definition below we introduce parking strategies more formally; $S_t(v,w)=1$ should be thought of as the event that the car starting from $v$ parks in $w$ at time $t.$

\begin{defn}\label{parkdefn}
Let $(L,R,\mu)$ be a parking triple. A \emph{parking strategy} $S = (S_t(v,w))_{t\ge 1, v,w \in V}$ for the car-based model on $(L,R,\mu)$ is a sequence of random variables taking values in $\{0,1\}$ with the following properties:
\begin{itemize}
\item $S_t(v,w)$ is $\cF_t$-measurable for each $v,w \in V$ and $t \ge 1.$
\item $\sum_{t\ge 1, w \in \bZ} S_t(v,w) \le 1$ (a car parks at most once).
\item $\sum_{t \ge 1, v \in \bZ} S_t(v,w) \le 1$ (a parking space can hold only one car).
\item $S_t(v,w) = 0$ whenever $B^v = 0$ (a parking space cannot be filled by a non-existent car).
\item $S_t(v,w) = 0$ whenever $B^w = 1$ (a car cannot park where there is no parking space).
\item $S_t(v,w) = 0$ whenever $X^v_t \neq w$ (a car cannot park in a space which is not its current position).
\end{itemize}
A car starting at $v$ \em parks in space $w$ at time $t$ \em if and only if $S_t(v,w) = 1.$
\end{defn}
Note that $S_t(v,w)$ being $\cF_t$-measurable means that our parking strategy is previsible, and that the parking time of a car is a stopping time.

For a parking strategy $S$ and an event $E$ we let $\bP^S\left[E\right]$ denote the probability of $E$ when all cars follow strategy $S$ (note that $\bP = \bP^G$). We will also allow random parking strategies, which require suitable adjustments to the $\sigma$-algebra and the filtration (for example, we may independently flip a coin at the start and choose different parking strategies depending on whether the coin is heads or tails).

Equipped with these new definitions, we are nearly ready to prove Theorem \ref{thm:supermarket}. The final element we shall need is a stochastically equivalent parking process, where the moves of cars are attached to spaces rather than the cars; we shall refer to this model as the \emph{space-based parking model}.

\begin{defn}\label{defn:space parking}
Let $(L,R,\mu)$ be a parking triple. Independently for each vertex $v \in V$, let: 
\begin{itemize}
\item $(E^v_n)_{n \in \bN}$ be a sequence of independent $\mu$-random variables,

\item $(\tilde{U}^v_s)_{s \in \bN}$ be a sequence of independent $\mathrm{Unif}([0,1])$ random variables. 

\item $\tilde{B}^v$ be a $\mbox{Bernoulli}(p)$ random variable. We initially place a car at $v$ when $\tilde{B}^v = 1$ and otherwise a parking space with the capacity for one car.
\end{itemize}

When a single car arrives (but does not park) at position $v$, it leaves in the next time step according to the first unused $E^v_n$. If the set of cars $\{w_1,\ldots,w_r\}$ arrives at $v$ at time $s$ and do not park, they collect the next $r$ unused directions $E^v_n, E^v_{n+1}, \ldots, E^v_{n+r-1}$, in the order determined by their increasing values of $U^{w_{\ell}}_s$.
\end{defn}

For the space-based parking model on the parking triple $(L,R,\mu)$ it is less obvious what the the probability space $(\tilde{\Omega},\tilde{\cF},(\tilde{\cF}_t)_{t\ge0},\tilde{\bP})$ should be. This is because the number of directions collected from $E^v$ by cars that visit $v$ by time $t$ but do not park there depends on the behaviour of cars starting at distance at most $t$ from $v$ in the first $t$ steps of the process. Hence, we can define the filtration $\left(\tilde{\cF}_t\right)_{t \ge 0}$ to be
\[
\tilde{\cF}_t = \sigma((\tilde{B}^v)_{v \in V}, (E^v_n)_{(v,n) \in \{V\times \bZ^+: n \le D^v(t)\}}, (\tilde{U}^v_s)_{v \in V,1 \leq s \leq t})
\]
for all $t \geq 0$, where
\[
D^v(t)= \left|\left\{(u,s) \in V \times \{0,\ldots,t-1\} : \mbox{car $u$ visits $v$ at time $s$ but does not park there}\right\}\right|
\]
is the number of departures from $v$ before time $t$. In other words $\tilde{\cF}_t$ contains exactly the parts of $(B,E,U)$ which determine the movements of cars up to time $t$, including which cars have parked. Although at first sight the Reader might find the random nature of $\tilde{\cF}_t$ confusing, we hope that it will not cause difficulties when following the proofs.

\begin{defn}\label{spaceparkdefn}
Let $(L,R,\mu)$ be a parking triple. A \emph{parking strategy} $\tilde{S} = (\tilde{S}_t(v,w))_{t\ge 1, v,w \in V}$ for the space-based model on $(L,R,\mu)$ is a sequence of random variables taking values in $\{0,1\}$ with the following properties:
\begin{itemize}
\item $\tilde{S}_t(v,w)$ is $\tilde{\cF}_t$-measurable for each $v,w \in V$ and $t \ge 1$.
\item $\sum_{t\ge 1, w \in \bZ} \tilde{S}_t(v,w) \le 1$ (a car parks at most once).
\item $\sum_{t \ge 1, v \in \bZ} \tilde{S}_t(v,w) \le 1$ (a parking space can hold only one car).
\item $\tilde{S}_t(v,w) = 0$ whenever $\tilde{B}^w = 1$ (a car cannot park where there is no parking space).
\item $\tilde{S}_t(v,w) = 0$ whenever $\tilde{B}^v = 0$ (a parking space cannot be filled by a non-existent car).
\item For all $v \in L$ such that:
\begin{itemize}
\item $\tilde{B}^v = 1$, and
\item for all $u \in L$ and $s \leq t-1$ we have $\tilde{S}_s(v,u) = 0$,
\end{itemize}
let $E^{v_1}_{n_1}, E^{v_2}_{n_2}, \ldots, E^{v_t}_{n_t}$ be the directions selected by $v$ in the first $t$ steps of its walk (note that we have $v_1 = v$). Then $\tilde{S}_t(v,w) = 0$ if the walk obtained by starting at $v$ and following these directions does not end at $w$
(a car cannot park in a space which is not its current position).
\end{itemize}
A car starting at $v$ \em parks in space $w$ at time $t$ \em if and only if $\tilde{S}_t(v,w) = 1.$
\end{defn}

We let $\tilde{G}$ denote the greedy parking strategy in the space-based model. In the following proposition we show that parking strategies in the car-based parking process are stochastically equivalent to corresponding parking strategies in the space-based parking proces.

\begin{prop}
\label{prop:sameOld}
Let $(L,R,\mu)$ be a parking triple. Let $S$ and $\tilde{S}$ be parking strategies for the car-based model and the space-based model on $(L,R,\mu)$ respectively, and assume that for all $t \geq 1$ and $v, w \in L$ we have $S_t(v,w) = \tilde{S}_t(v,w)$ whenever the following conditions hold:
\begin{enumerate}
\item $B^v = \tilde{B}^v$ for all $v \in L$ (the same cars appear in both models),
\item for all $1 \leq s < t$ and $v, w \in L$ we have $S_s(v,w) = \tilde{S}_s(v,w)$ (at every time $1 \leq s < t$, the same cars park in the same parking places in both models), and 
\item for all $1 \leq s \leq t$, every car that does not park before time $s$, occupies the same position at time $s$ in both models
\end{enumerate}
(i.e. the strategies $S$ and $\tilde{S}$ behave identically whenever the cars behave identically up to time $t$ in the two processes). Then for any two sets $X \subset L \times L \times \bN, Y \subset L$, and the event
\[
 A_{X, Y} = [ \mbox{ for all } (v_i, w_i, t_i) \in X, \mbox{car } v_i \mbox{ is in } w_i \mbox{ at time } t_i; \mbox{ for all } w_j \in Y, w_j \mbox{ is a parking space } ]
\]
we have $\bP^S[A_{X,Y}] = \tilde{\bP}^{\tilde{S}}[A_{X,Y}]$.
\end{prop}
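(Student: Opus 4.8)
The plan is to prove the two models are stochastically equivalent by constructing an explicit coupling: given the randomness $(B^v, X^v, U^v)$ of the car-based model, we produce randomness $(\tilde B^v, E^v, \tilde U^v)$ of the space-based model (and vice versa) so that the resulting car trajectories — and hence the parking decisions — agree pointwise. This reduces the claimed equality of probabilities of the events $A_{X,Y}$ to a deterministic statement about the coupled sample paths.

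\emph{Construction of the coupling.} Set $\tilde B^v = B^v$ and $\tilde U^v_s = U^v_s$ for all $v,s$; condition (1) is then immediate. The only real work is to build the direction stacks $E^v$. The idea is to read the $\mu$-increments off the car-based random walks in the order the cars visit each vertex. Formally, we run both processes inductively in time. Suppose that up to time $t-1$ the two processes have produced identical car positions (this is the inductive hypothesis, and it makes sense because by conditions (2)–(3) the two strategies have made identical parking decisions so far, so exactly the same cars are still moving and they sit at the same vertices). Consider a vertex $v$ and the (possibly empty) ordered list of still-moving cars $w_1, \dots, w_r$ present at $v$ at time $t-1$, ordered by increasing $U^{w_\ell}_t = \tilde U^{w_\ell}_t$. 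For each $\ell$, the car-based model moves $w_\ell$ using the increment $X^{w_\ell}_t (X^{w_\ell}_{t-1})^{-1} \in R$; we define the next $r$ unused entries of the stack $E^v$, in order, to equal these increments. Since each $X^{w_\ell}$ is a $\mu$-random walk independent across $\ell$ and $v$, and the assignment of increments to stack positions is a (measurable, bijective) reindexing that does not look ahead, the resulting $(E^v_n)_{v,n}$ are i.i.d.\ $\mu$ and independent of $(\tilde B^v)$ and $(\tilde U^v_s)$, as required by Definition~\ref{defn:space parking}. (Entries of $E^v$ never consulted by any car may be filled with fresh independent $\mu$-variables; this does not affect anything.) By construction, at time $t$ every still-moving car occupies the same vertex in both models, completing the induction.

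\emph{Transferring the parking decisions.} Given that the car positions agree at all times under the coupling, an induction on $t$ shows the parking decisions agree too: assuming $S_s = \tilde S_s$ for all $s < t$ (the base case $t=1$ is vacuous for the hypotheses, or handled directly), conditions (1)–(3) of the proposition hold at time $t$, so the hypothesis $S_t(v,w) = \tilde S_t(v,w)$ applies and the decisions at time $t$ coincide as well. Hence under the coupling the two processes are \emph{identical} as trajectory-plus-parking configurations, so for every $X \subseteq L\times L\times\bN$ and $Y\subseteq L$ the event $A_{X,Y}$ occurs in one model iff it occurs in the other. Taking probabilities gives $\bP^S[A_{X,Y}] = \tilde\bP^{\tilde S}[A_{X,Y}]$.

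\emph{Main obstacle.} The delicate point is verifying that the stacks $(E^v_n)$ really do have the correct law — i.e.\ that reading increments off the car walks ``in visiting order'' yields genuinely i.i.d.\ $\mu$-variables, independent of the collision-breaking variables. The subtlety is that which increment of which car gets assigned to stack slot $E^v_n$ is itself random and depends on the earlier evolution; one must check this is a previsible reshuffling so that no conditioning bias is introduced. This is handled by the standard observation that if one reveals the process step by step, then at each step the next increment of any given moving car is a fresh $\mu$-sample independent of everything revealed so far (the filtration $\cF_{t-1}$), and the slot it is placed into is $\cF_{t-1}$-measurable; an exchangeability/regeneration argument then upgrades this to the joint i.i.d.\ statement. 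The remaining inductive bookkeeping — that identical positions plus conditions (1)–(3) force identical parking, and conversely — is routine given the way the hypotheses of the proposition are phrased.
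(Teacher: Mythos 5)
Your argument is correct, but it takes a genuinely different route from the paper's. The paper first disposes of the degenerate cases $|X|=\infty$ and $|Y|=\infty$ directly, and then, for finite $X$ and $Y$, expresses $A_{X,Y}$ in each model as a finite union of elementary events in the underlying independent variables ($B^v, X^v_s, U^v_s$ on one side; $\tilde B^v, E^v_n, \tilde U^v_s$ on the other) and matches the two decompositions term by term using the identical laws and independence of these building blocks (the observation that each $E^v_n$ is consumed at most once playing the role of your previsibility check). You instead construct an explicit pathwise coupling, reading the increments of the car-based walks into the direction stacks in visiting order, which makes the two processes literally identical samplewise; this yields the conclusion for arbitrary $X$ and $Y$ in one stroke, with no separate treatment of the infinite cases, at the price of having to verify that the reshuffled increments are still i.i.d.\ $\mu$ and independent of the remaining randomness. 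You correctly single out that verification as the crux, and your sketch of it (the slot assignment is measurable with respect to the past while the increment revealed at time $t$ is a fresh $\mu$-sample independent of the past) is the right argument, at a level of detail comparable to the paper's own. One bookkeeping slip: by Definition~\ref{defn:space parking}, the cars present at $v$ at time $t-1$ collect their directions in the order of $U^{w_\ell}_{t-1}$ (their arrival time), not $U^{w_\ell}_{t}$; since you apply the same convention on both sides of the coupling this does not affect the validity of the argument, but it should be aligned with the definition.
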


\begin{proof}
We have $\bP^S[A_{X,Y}], \bP^{\tilde{S}}[A_{X,Y}] \leq (1-p)^{|Y|}$, so if $|Y| = \infty$ then $\bP^S[A_{X,Y}], \bP^{\tilde{S}}[A_{X,Y}] = 0$ and the proposition holds.

If $|X| = \infty$ then $A_{X,Y}$ must either describe the moves of infinitely many cars, or there must be a car $v$ such that $A_{X,Y}$ gives the position of $v$ at infinitely many times, or there are some $w_1 \neq w_2$ and some $v \in L, t \in \bN$, such that $(v, w_1, t), (v, w_2, t) \in X$. In all of these cases we have $\bP^S[A_{X,Y}], \bP^{\tilde{S}}[A_{X,Y}] = 0$.

Hence we can assume that $|X|, |Y| < \infty$. Then, let
\[
U = \{v : (v,w,t) \in X \} \cup \{w : (v,w,t) \in X \} \cup Y,
\]
and let $T = \max \{t : (v,w,t) \in X\}$. Then, in the car-based model, we can express $A_{X,Y}$ as a finite union of finite events concerning the variables $B^v, X^v_t, U^v_t$, for $t \leq T$ and $v$ at distance at most $T$ from some element in $U$, describing the car/parking space status and the step-by-step moves of cars in the $T$-neighbourhood of the elements if $U$. Analogously, in the space-based model, we can express $A_{X,Y}$ as a finite union of finite events concerning the variables $\tilde{B}^v, E^v_n, \tilde{U}^v_t$, for $t \leq T$, $n \leq T^2$, and $v$ at distance at most $T$ from some element in $U$. The proposition now follows from the properties of $S$ and $\tilde{S}$, from the identical distributions and independence of $(B^v)_{v \in V}$ and $(\tilde{B}^v)_{v \in V}$, of the $(U^v_t)_{v \in V,t \geq 0}$ and $(\tilde{U}^v_t)_{v \in V,t \geq 0}$, as well as of $(X^v)_{v \in \bZ}$ and $((E^v_n)_{n \in \bN})_{v \in \bZ}$ (observe that each of $E^v_n$ is used at most once in the process).
\end{proof}

Proposition \ref{prop:sameOld} will allow us to deduce Theorem \ref{thm:supermarket} from the following theorem.

\begin{thm}\label{thm:supermarketSpaces}
Let $\tilde{S}$ be a parking strategy for the space-based parking process on the parking triple $(L,R,\mu)$. For a vertex $v$, we write $V^v_{\tilde{S}}(t)$ for the value of $V^v(t)$ when strategy $\tilde{S}$ is followed, and $V^v_{\tilde{G}}(t)$ for the value of $V^v(t)$ when the greedy strategy is followed. Then for all $t \ge 0,$ we have $V^v_{\tilde{S}}(s) \geq V^v_{\tilde{G}}(s)$.
\end{thm}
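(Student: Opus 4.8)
The plan is to run the two processes on one probability space, feeding both the \emph{same} families $(\tilde B^v)_v$, $(E^v_n)_{v,n}$, $(\tilde U^v_s)_{v,s}$, and to compare them step by step. For a strategy $S$ and a vertex $u$, let $f_S(u,t)$ be the number of directions from the list $E^u_1,E^u_2,\dots$ used up at $u$ by time $t$ --- equivalently, the number of pairs $(c,s)$ with $s\le t-1$ such that car $c$ sits at $u$ at time $s$ without parking there --- and for $s\ge 0$ let $A^v_s$ be the set of cars that are at $v$ at time $s$ and have not parked before time $s$ (so the car parking at $v$ at time exactly $s$, if any, lies in $A^v_s$). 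Counting visits time step by time step, $V^v(t)=\sum_{s=0}^{t}|A^v_s|$, and splitting each $A^v_s$ into the at most one car that parks at $v$ then and the cars passing through yields
\[
V^v(t)=f(v,t+1)+\indi[\text{$v$ has been filled by time $t$}].
\]
Each of $V^v(t)$, $f_S(u,t)$, $|A^v_s|$ depends on only finitely many cars, so there is no loss in keeping all cars present (or, if one prefers, in truncating to cars starting within a bounded distance of $v$).

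I would prove $V^v_{\tilde S}(t)\ge V^v_{\tilde G}(t)$ for all $v,t$ by induction on $t$, carrying along a structural domination invariant for the coupled configurations at time $t$: \textbf{(i)} odometer monotonicity, $f_{\tilde G}(u,t)\le f_{\tilde S}(u,t)$ for every $u$; \textbf{(ii)} a position-preserving injection from the set of cars still in transit at time $t$ under $\tilde G$ into the set of cars still in transit at time $t$ under $\tilde S$; and \textbf{(iii)} $f_{\tilde S}(v,t+1)\ge f_{\tilde G}(v,t+1)+1$ for every vertex $v$ that has been filled under $\tilde G$ but not under $\tilde S$ by time $t$. The inductive step carries the weight: passing from $t$ to $t+1$, the cars leaving a vertex $x$ are exactly those of $A^x_t$ that do not park there, and in both processes these cars read off an initial segment of the \emph{same} direction list $E^x_1,E^x_2,\dots$; using \textbf{(i)} and \textbf{(ii)} one produces the injection at time $t+1$ and re-establishes \textbf{(i)} and \textbf{(iii)}. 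The case that must be handled carefully is a vertex from which greedy moves a car onto an empty space $v$ and parks it there while the $\tilde S$-process does not fill $v$: greedy consumes no direction at $v$, so the $\tilde G$-odometer at $v$ falls behind while a space is nonetheless used up, and this is exactly the surplus bookkept by \textbf{(iii)}.

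Granting the invariant at all times up to $t$, the theorem follows from the displayed identity: \textbf{(i)} at time $t+1$ gives $f_{\tilde G}(v,t+1)\le f_{\tilde S}(v,t+1)$, which together with \textbf{(iii)} yields $f_{\tilde G}(v,t+1)+\indi[\text{$v$ filled under $\tilde G$ by $t$}]\le f_{\tilde S}(v,t+1)+\indi[\text{$v$ filled under $\tilde S$ by $t$}]$, i.e.\ $V^v_{\tilde G}(t)\le V^v_{\tilde S}(t)$.

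The main obstacle --- and the reason one cannot simply couple trajectories --- is that the two processes' car paths genuinely diverge: a direction is permanently attached to its vertex and handed to passing cars in order, so one extra pass through a vertex (because some car declined to park earlier) permanently reroutes every later car through it. Hence the injection in \textbf{(ii)} cannot be ``the same cars'' and has to be rebuilt from scratch at each step, and pinning down the form of the invariant that is simultaneously strong enough for the conclusion and preserved by the dynamics is the real content of the argument. An alternative organisation that may lighten the bookkeeping is to note that, for the purpose of bounding $V^v(t)$, one may assume $\tilde S$ agrees with $\tilde G$ from time $t$ on, so $\tilde S$ makes only finitely many non-greedy decisions, and then to remove these one at a time --- replacing the last non-greedy decision by the greedy one --- proving by a local coupling that each such replacement never increases any $V^v(s)$. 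In either route, the $\tilde U$-governed tie-break for simultaneous arrivals requires a small but genuine check, since the order in which co-located cars take directions must be matched between the two processes.
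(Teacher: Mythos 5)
Your coupling (running both processes on the same $(\tilde B^v)$, $(E^v_n)$, $(\tilde U^v_s)$), your identity $V^v(t)=f(v,t+1)+\indi[v\text{ filled by time }t]$, and your invariants \textbf{(i)} and \textbf{(iii)} are exactly the right ingredients, and in fact the paper's proof is precisely the induction on $t$ carrying the cumulative counts: it tracks $V^u(t)$ together with the per-direction departure counts, which is equivalent to your odometer \textbf{(i)}. The genuine problem is invariant \textbf{(ii)}. A position-preserving injection from the cars in transit at time $t$ under $\tilde G$ into those under $\tilde S$ would require, for every vertex $u$, that the \emph{instantaneous} occupancy of $u$ at time $t$ be at least as large under $\tilde S$ as under $\tilde G$, i.e.\ that the increments $f_{\tilde S}(u,t+1)-f_{\tilde S}(u,t)\ge f_{\tilde G}(u,t+1)-f_{\tilde G}(u,t)$. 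Only the cumulative quantities are comparable. The two processes consume the direction list at a neighbour $u'$ at different rates, so a given direction $E^{u'}_n$ pointing towards $u$ may be handed out at step $1$ under $\tilde S$ but only at step $2$ under $\tilde G$; then at time $2$ the vertex $u$ can hold an in-transit car under $\tilde G$ and none under $\tilde S$, even though $f_{\tilde S}(u,\cdot)\ge f_{\tilde G}(u,\cdot)$ throughout. So \textbf{(ii)} is false in general, and since you state that the inductive step uses \textbf{(i)} and \textbf{(ii)} to rebuild the injection and re-establish \textbf{(i)} and \textbf{(iii)}, the induction as organised does not close.

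The repair is to delete \textbf{(ii)} entirely: no information about car identities or instantaneous positions is needed, only counts. Given \textbf{(i)} at time $t$ at the neighbours of $u$, the directions consumed at each neighbour $u'$ under $\tilde G$ form an initial segment of those consumed under $\tilde S$ (same list, consumed in order), so the cumulative number of departures from $u'$ towards $u$ is monotone, whence $V^u_{\tilde S}(t)\ge V^u_{\tilde G}(t)$. Then $f(u,t+1)=V^u(t)-\indi[u\text{ filled by time }t]$ re-establishes \textbf{(i)} at time $t+1$: the only delicate case is $u$ filled under $\tilde S$ but not under $\tilde G$ with equal visit counts, and this cannot occur, since a park under $\tilde S$ forces $u$ to be an initial space with $V^u_{\tilde G}(t)\ge 1$, so greedy fills it too. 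Your \textbf{(iii)} then comes for free from $V^u_{\tilde S}(t)\ge V^u_{\tilde G}(t)$ rather than needing to be carried as a separate invariant, and the tie-breaking variables require no matching at all, since the multiset of directions dispensed at a vertex in a given step depends only on how many cars leave, not on which car receives which direction. With these changes your argument becomes the paper's.
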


\begin{proof}
Consider the space-based parking process on a parking triple $(L,R,\mu)$. Let $T^{v,r^{-1}}(t-1)$ be the number of cars that arrived at $vr^{-1}$ in the first $t-1$ time steps and then picked up $E^{vr^{-1}}_n = r$. Observe that $V^v(t)$ is equal to the sum over $r\in R$ of $T^{v,r^{-1}}(t-1)$, plus $1$ if a car started at $v$ initially. By induction on $t$ we prove the following claim: for all $t \geq 0$ we simultaneously have $T^{v,r^{-1}}_{\tilde{S}}(t-1) \geq T^{v,r^{-1}}_{\tilde{G}}(t-1)$ and $V^v_{\tilde{S}}(t) \ge V^v_{\tilde{G}}(t),$ for all $r \in R$ (where again $T_{\tilde{S}}$ and $T_{\tilde{G}}$ denote the quantities when all cars follow strategy ${\tilde{S}}$ or ${\tilde{G}}$ respectively).

If a car parks at $v$ in the first $t$ time steps under ${\tilde{S}}$ then $v$ must have initially been a parking space; then, if at least one car drove to $v$ under ${\tilde{G}}$, it follows that some car parked in $v$ under ${\tilde{G}}$ as well. Hence if the number of cars arriving at any vertex in the first $t$ time steps is at least as large under ${\tilde{S}}$ as under ${\tilde{G}}$, the same applies to the number of cars leaving $v$ in the first $t+1$ time steps. Moreover, for each $r \in R,$ since the directions $E^{vr^{-1}}_n$ are selected one-by-one in a fixed order, $V^{vr^{-1}}_{\tilde{S}} (t) \ge V^{vr^{-1}}_{\tilde{G}} (t)$ implies $T^{v,r^{-1}}_{\tilde{S}} (t) \geq T^{v,r^{-1}}_{\tilde{G}} (t).$

The base case $t = 0$ of the induction is trivial. Hence suppose that our claim is true for $t = s -1 \ge 0.$  By induction, for each $r \in R,$ we have $V^{vr^{-1}}_{\tilde{S}}(s-1) \ge V^{vr^{-1}}_{\tilde{G}}(s-1)$; hence we have $T^{v,r^{-1}}_{\tilde{S}}(s-1) \geq T^{v,r^{-1}}_{\tilde{G}}(s-1).$ We then obtain
\begin{align*}
V^v_{\tilde{S}}(s) & = \sum_{r\in R} T^{v,r^{-1}}_{\tilde{S}}(s-1) + \indi_{\{v \mbox{ \small is a car}\}} \\
 & \geq \sum_{r\in R} T^{v,r^{-1}}_{\tilde{G}}(s-1) + \indi_{\{v \mbox{ \small is a car}\}} \\
 & = V^v_{\tilde{G}}(s).
\end{align*}
This completes the proof of Theorem \ref{thm:supermarketSpaces}.
\end{proof}

\begin{rem}
We observe that this pathwise dominance in the space-based parking process does not in general hold for the car-based parking process. Suppose that cars are initially only located at $\{1, 2, 4\}$, that cars $2$ and $4$ always go left, and that the path of car $1$ is $+1, +1, -1, +1, -1, +1, \ldots$, i.e., two steps to the right followed by an infinite sequence of pairs $-1, +1$. In the greedy strategy, car $2$ parks at $0$ and car $4$ parks at $3$, while $1$ never parks alternating between positions $2$ and $3$ forever. Consequently we have $V^2_{\tilde{G}}(s), V^3_{\tilde{G}}(s) \approx s/2$. On the other hand, if we ban all cars from parking on the first time steps, car $2$ still parks at $0$, car $4$ parks at $-1$, and $1$ parks at $3$, and all vertices are visited only finitely many times.
\end{rem}

\begin{proof}[Proof of Theorem  \ref{thm:supermarket}] 
Let $S$ be a parking strategy for the car-based model on the parking triple $(L,R,\mu)$, let $v \in L$, and let $t, k \ge 0$. Observe that for parking strategies in the space-based model, the filtration $\tilde{\cF}_t$ carries all the information about the moves of all cars up to time $t$. Therefore we can design a parking strategy $\tilde{S}$ for the space-based model, such that the assumptions of Proposition \ref{prop:sameOld} are satisfied for $S$ and $\tilde{S}$.

Next, we can express the event $[V^v_S(t) \le k]$ as a finite union of events $A_{X,Y}$, defined as in Proposition \ref{prop:sameOld}, describing the car/parking space status and movements of cars starting at distance at most $t$ from $v$, such that at most $k$ cars arrive at $v$ by time $t$ under $S$. By Proposition \ref{prop:sameOld}, we have $\bP^S[A_{X,Y}] = \tilde{\bP}^{\tilde{S}}[A_{X,Y}]$. By Theorem \ref{thm:supermarketSpaces} we have $V^v_{\tilde{S}}(s) \geq V^v_{\tilde{G}}(s)$ deterministically, hence if $A_{X,Y} \subseteq [V^v_{\tilde{S}}(t) \le k]$, then also $A_{X,Y} \subseteq [V^v_{\tilde{G}}(t) \le k]$. Thus we have $\bP[V^v_{\tilde{G}}(t) \le k] \geq \tilde{\bP}[V^v_{\tilde{S}}(t) \le k]$, and since by applying Proposition \ref{prop:sameOld} again we find that $\bP^G[A_{X,Y}] = \tilde{\bP}^{\tilde{G}}[A_{X,Y}]$, we finally obtain $\bP[V^v_G(t) \le k] \geq \bP[V^v_S(t) \le k]$ as claimed.
\end{proof}

In the rest of this paper, we shall consider the car-based parking model only. We remark that for parking times we may not make a conclusion similar to Theorem \ref{thm:supermarket}. For example, consider the parking strategy where all but one car is instructed to never park. The chosen car will have a much easier job of finding a parking space. To combat this, we need some symmetry that will allow us to compare visits to a space and parking times of cars, and therefore make use of Theorem \ref{thm:supermarket}

We say that a parking strategy $S$ on the parking triple $(L,R,\mu)$ is \emph{weakly translation invariant} if for all $v,w \in V$, $r\in R$  and $t \ge 0$, 
\[
 \bP^S\left[S_t(v,w)=1\right] = \bP^S\left[S_t(vr,wr)=1\right].
\]
An equivalent property is that for all $v,w \in V$, $r\in R$ and $t \geq 1$,
\[
\bP^S\left[\mbox{car $v$ arrives at spot $w$ at time $t$}\right] = \bP^S\left[\mbox{car $vr$ arrives at spot $wr$ at time $t$}\right].
\] 

\begin{rem}
This is a rather weak form of translation invariance -- it does not control joint events in any sense. Since in this paper we are predominantly working with expectations, we do not need to worry about this. A more natural form of translation invariance is the following form: a parking strategy $S$ on the parking triple $(L,R,\mu)$ is \emph{strongly translation invariant} if for any $r\in R,$ the probability measure $\bP$ is invariant with respect to a translation by $r$. (The same is true for car removal strategies which we introduce later.) We note that the parking strategy (respectively, car removal strategy) we use in Section \ref{sec:p=1/2upper} (respectively, Section \ref{sec:p=1/2lower}) are in fact strongly translation invariant.
\end{rem}

Weak translation invariance allows us to equate car journey lengths with total number of visits to a position in $V.$

\begin{lem}\label{lem:dual}
Let $S$ be a weakly translation invariant strategy on the parking triple $(L,R,\mu)$. Then for all $t \ge 0$ and $v \in V,$
\[
 \bE^S[\tau \wedge t] = \bE^S[V^v(t)].
\]
\end{lem}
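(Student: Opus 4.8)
The plan is to compute both sides by summing indicator random variables over cars and parking spaces, and then to match terms using weak translation invariance. First I would write the left-hand side as a sum over time steps: since $\tau \wedge t = \sum_{s=1}^{t} \indi_{\{\tau \ge s\}}$, we get $\bE^S[\tau \wedge t] = \sum_{s=1}^{t} \bP^S[\tau \ge s]$, where $\tau = \tau^0$. Now $\{\tau \ge s\}$ is the event that the car starting at $0$ exists (i.e.\ $B^0 = 1$) and has not yet parked by time $s-1$; equivalently, it is the event that this car is still driving and occupies some vertex at time $s-1$. So $\bP^S[\tau \ge s] = \sum_{w \in V} \bP^S[\text{car } 0 \text{ is driving and at } w \text{ at time } s-1]$, the sum being over the (at most one) location the car can occupy.

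Next I would expand the right-hand side. By definition $V^v(t) = \bigl|\{(u,j) \in V \times [t] : \text{car } u \text{ visits } v \text{ at time } j\}\bigr| + \indi_{\{v \text{ is a car}\}}$, so by linearity $\bE^S[V^v(t)] = \sum_{u \in V} \sum_{j=1}^{t} \bP^S[\text{car } u \text{ visits } v \text{ at time } j] + \bP^S[B^v = 1]$. Here ``car $u$ visits $v$ at time $j$'' should be read as: $B^u = 1$ and the walk of car $u$ (which, until it parks, is just its random walk, and after parking stays put) is at $v$ at time $j$. I would take $v = 0$ using the symmetry already noted in the paper, so the target identity becomes
\[
\sum_{s=1}^{t} \sum_{w \in V} \bP^S[\text{car } 0 \text{ driving at } w \text{ at time } s-1] \;=\; \sum_{u \in V} \sum_{j=1}^{t} \bP^S[\text{car } u \text{ at } 0 \text{ at time } j] + \bP^S[B^0=1].
\]

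The key step is to re-index. In the left sum, reparametrise $s-1 = j$ ranging over $0,\dots,t-1$; the $j=0$ term contributes exactly $\bP^S[B^0 = 1]$, matching the isolated term on the right. For $j \ge 1$, I would split ``car $0$ at $w$ at time $j$'' into ``driving at $w$'' plus ``parked at $w$ by time $j$''; summed over $w$ the parked piece gives $\bP^S[\text{car }0\text{ has parked by time }j]$ and we can instead just track ``car $0$ is at $w$ at time $j$'' where being parked counts as being at one's parking spot — then the left side for $j\ge 1$ reads $\sum_{w} \bP^S[\text{car }0\text{ is at }w\text{ at time }j]$ provided once parked the car stays. (One must be slightly careful: for a car that has parked, $\tau \ge s$ fails, so the left side really does only count driving cars; but since a parked car occupies exactly one spot and a driving car occupies exactly one spot, summing over all $w$ of ``car $0$ occupies $w$'' overcounts by $\bP^S[\text{car }0\text{ parked by time }j]$ relative to $\bP^S[\tau\ge j+1]$, and this is precisely absorbed.) The decisive move is then weak translation invariance in the form $\bP^S[\text{car }0\text{ arrives at }w\text{ at time }j] = \bP^S[\text{car }(-w)\text{ arrives at }0\text{ at time }j]$ (translating by the group element $w^{-1}$, valid since $L$ is a Cayley graph and the initial configuration is i.i.d., hence translation-invariant in distribution, and $S$ is weakly translation invariant). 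Reindexing $u = -w = w^{-1}$ as $w$ ranges over $V$ transforms $\sum_w \bP^S[\text{car }0\text{ at }w\text{ at time }j]$ into $\sum_u \bP^S[\text{car }u\text{ at }0\text{ at time }j]$, which is exactly the right-hand side term for that $j$. Summing over $j$ from $1$ to $t$ (after the shift) completes the identity.

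The main obstacle I anticipate is bookkeeping around the distinction between ``a car is driving at $w$'' and ``a car occupies $w$ (driving or parked)'', and making sure the parked-car contributions on the two sides cancel correctly rather than producing an off-by-$\bP^S[\text{parked}]$ error; getting the time-shift $s \mapsto s-1$ aligned with the isolated $\indi_{\{v\text{ is a car}\}}$ term is the place where sign/index errors are easy. Once those are pinned down, the only substantive input is weak translation invariance, which is exactly what lets us swap the roles of the starting vertex and the visited vertex; everything else is linearity of expectation and Fubini (all terms nonnegative, so no convergence worry even though the sums over $V$ are infinite).
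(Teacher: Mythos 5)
Your proposal is correct and follows essentially the same route as the paper: write $\bE^S[\tau\wedge t]$ as a double sum over times and positions of the probabilities that car $0$ arrives at a given spot at a given time, apply weak translation invariance to turn each term $\bP^S[\mbox{car }0\mbox{ arrives at }w\mbox{ at time }s]$ into $\bP^S[\mbox{car }w^{-1}\mbox{ arrives at }0\mbox{ at time }s]$, and re-sum to recognise $\bE^S[V^0(t)]$. The paper's proof is a three-line version of exactly this computation (it leaves the indicator/time-shift bookkeeping you worry about implicit), so your extra care there is welcome but not a departure in method.
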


One can observe that the equality in Lemma \ref{lem:dual} only holds for expectation, since these are very distinct random variables: for instance, $\tau \wedge t$ is bounded by $t$, while $V^v(t)$ could reach the order $t^2$.

We remark that Lemma \ref{lem:dual} is a special case of the well known and more general Mass-Transport principle \cite[Theorem 8.7]{L+P} and a similar result was noted at \cite[Lemma 4.1]{DGJLS}. Since the proof is very short in our setting, we include it for self-containment.

\begin{proof}
Let $t \ge 0$ and fix an arbitrary $v \in V.$ Write $B_t(v)$ for the vertices of $L$ connected to $v$ by a path of length at most $t$. By translation invariance
\begin{align*}
 \bE^S[\tau \wedge t] = \bE^S[\tau^v \wedge t] & = \sum_{s \in [t]}\sum_{w \in B_{t}(0)} \bP^S\left[\mbox{car $v$ arrives at spot $vw$ at time $s$}\right] \\
 & = \sum_{s \in [t]}\sum_{w \in B_t(0)} \bP^S\left[\mbox{car $vw^{-1}$ arrives at spot $v$ at time $s$}\right] \\
 & = \bE^S[V^v(t)].
\end{align*}
\end{proof}

The following easy corollary of Theorem \ref{thm:supermarket} and Lemma \ref{lem:dual} is crucial for our arguments, and considers the expected journey of a car up to time $t$ under different parking strategies. It will allow us to derive upper bounds on $\bE^G[\tau \wedge t]$ by considering a different parking strategy which is easier to control.

\begin{cor}
\label{lem:supermarket}
Let $S$ be a weakly translation invariant parking strategy on the parking triple $(L,R,\mu)$. Then for all $t \ge 0,$
\[
 \bE^S[\tau \wedge t] \ge \bE^G[\tau \wedge t].
\]
\end{cor}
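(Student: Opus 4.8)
The plan is to derive Corollary~\ref{lem:supermarket} by simply combining Theorem~\ref{thm:supermarket} with Lemma~\ref{lem:dual}. First I would apply Lemma~\ref{lem:dual} to the weakly translation invariant strategy $S$: this gives $\bE^S[\tau \wedge t] = \bE^S[V^v(t)]$ for any fixed vertex $v \in V$. Next I would apply Lemma~\ref{lem:dual} to the greedy strategy $G$ --- here I need to observe that $G$ is itself weakly translation invariant, which is immediate since the greedy parking rule is translation invariant (a car parks as soon as it reaches a free space, and the underlying law $\bP = \bP^G$ is invariant under the group action). This yields $\bE^G[\tau \wedge t] = \bE^G[V^v(t)]$.

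It then remains to compare $\bE^S[V^v(t)]$ with $\bE^G[V^v(t)]$, and this is exactly what Theorem~\ref{thm:supermarket} provides. That theorem tells us that the distribution of $V^v_G(t)$ majorises that of $V^v_S(t)$, i.e.\ $\bP\left[V^v_G(t) \le k\right] \ge \bP\left[V^v_S(t) \le k\right]$ for all $k \ge 0$. Since $V^v_S(t)$ and $V^v_G(t)$ are non-negative integer-valued random variables, I would write the expectation as a sum of tail probabilities,
\[
 \bE^S[V^v(t)] = \sum_{k \ge 0} \bP\left[V^v_S(t) > k\right] \ge \sum_{k \ge 0} \bP\left[V^v_G(t) > k\right] = \bE^G[V^v(t)],
\]
where the inequality is term-by-term from the majorisation (taking complements of the events $\{V \le k\}$). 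Chaining the three (in)equalities gives $\bE^S[\tau \wedge t] = \bE^S[V^v(t)] \ge \bE^G[V^v(t)] = \bE^G[\tau \wedge t]$, as required.

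There is essentially no obstacle here: the corollary is a routine assembly of two results already proved in this section, and the only small point to be careful about is noting that the greedy strategy $G$ qualifies as a weakly translation invariant strategy so that Lemma~\ref{lem:dual} applies to it as well. One could alternatively avoid invoking Lemma~\ref{lem:dual} twice by noting that both expectations equal the corresponding $\bE[V^v(t)]$ by the same translation-invariance computation, but invoking the lemma directly is cleanest. The tail-sum step is standard and needs no elaboration beyond a one-line justification.
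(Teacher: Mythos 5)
Your proposal is correct and is exactly the argument the paper intends: the corollary is stated there without proof as an immediate consequence of Theorem~\ref{thm:supermarket} and Lemma~\ref{lem:dual}, and your assembly (Lemma~\ref{lem:dual} applied to both $S$ and the weakly translation invariant greedy strategy $G$, plus the tail-sum comparison from the stochastic domination) fills in precisely the steps being left implicit. No issues.
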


\subsection{Car removal strategies}

Another way to modify the car parking problem is through \emph{car removal strategies}. Under certain circumstances it will be helpful to pretend that a car has been removed from the process. A car is removed during a step, and it is parked off $V$. So if car $v$ is at position $w$ at time $t,$ and is removed during step $t+1$, we remove the car from the process without it taking up a parking space and set $\tau^v = t+1.$ We remark that we will always assume a greedy parking strategy when we have a non-trivial car removal strategy.

\begin{defn}\label{teledefn}
Let $(L,R,\mu)$ be a parking triple. A \emph{car removal strategy} $Q = (Q_t(v))_{t\ge 1, v\in V}$ on $(L,R,\mu)$ is a sequence of random variables taking values in $\{0,1\}$ with the following properties:
\begin{itemize}
\item $Q_t(v)$ is $\cF_t$-measurable for each $v \in V$ and $t \ge 1.$
\item $Q_t(v) = 0$ whenever $B^v = 0$ (a non-existent car cannot be removed).
\item $\sum_{t\ge 1} Q_t(v) \le 1$ (a car can only be removed once).
\end{itemize}
A car starting at $v$ is removed in the $t$-th time step if and only if $Q_t(v) = 1.$
\end{defn}
As we did for parking strategies, we define $\bP^Q$ for a car removal strategy $Q$. Whenever we explicitly consider a process involving car removal strategies, we assume that all vehicles follow the greedy parking strategy.

We are now ready to prove Theorem \ref{thm:pinkfloyd}.  In the one-dimensional setting this will allow us to derive lower bounds on $\bE[\tau \wedge t]$ by considering an interval and removing cars that enter or leave the interval.

\begin{proof}[Proof of Theorem \ref{thm:pinkfloyd}]
For each $w \in V$ and $t \ge 0,$ let $W^w_Q(t)$ be the set of unparked cars at position $w$ at time $t$ under $Q$, and let $W^w_G(t)$ denote the same quantity under $G$ (recall that under $G$, which is the greedy parking strategy, there is no car removal). We start by showing that at every position $w \in V$ and for every time $t \ge 0$ we have $W^w_Q(t) \subseteq W^w_G(t)$. We prove this by induction on $t \ge 0$. The base case $t=0$ is trivial, hence suppose that the claim is true up to and including time $t-1$.

Fix a position $w$ and observe first that if a parking space $w$ is filled at time $t$ under $Q$, then a car $v$ from $W^{wr^{-1}}_Q(t-1)$ must arrive at $w$ at time $t$ for some $r \in R$. By the inductive hypothesis, $v$ must be in the appropriate set in $W^{wr^{-1}}_G(t-1)$, and so it must arrive at $w$ at time $t$ under $G$ (note that we are in the original parking process, where cars have random walks attached to them, rather than the space-based parking process considered in the proof of Theorem \ref{thm:supermarketSpaces}). Therefore under $G$ either spot $w$ must already be filled before $t$, or a car must park in spot $w$ at time $t$. Therefore any parking space filled under $Q$ at time $t$ must be filled under $G$ at time not later than $t$.

Now, by the inductive hypothesis, any car arriving at position $w$ under $Q$ at time $t$ must arrive at position $w$ under $G$ at time $t$. If $w$ is not a free parking space under $G$ at time $t-1$, then $W^w_Q(t) \subseteq W^w_G(t)$ and the claim holds. Thus suppose that $w$ is a free parking space at time $t-1$ under $G$. Then by the argument above, $w$ must be a free parking space at time $t-1$ under $Q$. Further, if under $G$ a car not from $W^w_Q(t)$ parks at $w$ at time $t$, then again $W^w_Q(t) \subseteq W^w_G(t)$ and again we are done. So suppose that under $G$ a car $v \in W^w_Q(t)$ parks at position $w$ at time $t$. By the tie-breaking procedure, $v$ must have the smallest $U^x_t$ value over the cars $x$ that arrive at $w$ under $G$, and so must have the smallest $U^x_t$ value over cars $x$ that arrive at $w$ under $Q$. Therefore under $Q$ the car $v$ must also park at $w$ at time $t$, and so once again we have $W^w_Q(t) \subseteq W^w_G(t)$.

Now, consider that the set of unparked cars at time $t$ is the union $\bigcup_{w \in V}W^w(t)$, hence if a car $v$ is still unparked under $Q$ at time $t$, then there is some $w \in L$ such that $v \in W^w_Q(t)$. But we know that $W^w_Q(t) \subseteq W^w_G(t)$, therefore we have $v \in W^w_G(t)$, implying that $\tau^v_Q \le \tau^v_G$ as desired.

Also, the number of visits to $w$ at time $t$ is
\[
|W^w(t)|+ \indi_{\{\mbox{\small a car parks at } w \mbox{ \small at time } t\}}.
\]
Since $W^w_Q(t) \subseteq W^w_G(t)$, and additionally we know that
\[
\sum_{s=1}^t \indi_{\{\mbox{\small a car parks under } Q \mbox{ at } w \mbox{ \small at time } s\}} \leq \sum_{s=1}^t \indi_{\{\mbox{\small a car parks under } G \mbox{ at } w \mbox{ \small at time } s\}},
\]
the inequality $V^v_Q(t) \le V^v_G(t)$ follows.
\end{proof}

\section{Probabilistic bounds}
\label{sec:probabilities}
In this section, we state some probabilistic bounds that are needed for the proofs in Sections \ref{sec:p=1/2upper}, \ref{sec:p=1/2lower}, and \ref{sec:p<1/2}. 

We make use of the following variant of the Chernoff bound (see \cite[Chapter~4]{Chernoffcite}).

\begin{lem}\label{feelthechern}
Let $p \in (0,1),$ $N \in \bN,$ and $\varepsilon > 0.$ Then $$\bP\left[\Bin(N,p) \ge N(p+\varepsilon)\right] \le e^{-2\varepsilon^2N}.$$
\end{lem}

We need some facts about hitting times of the simple symmetric random walk.
\begin{lem}\label{lbeforer}
Let $a, b > 0$ be positive integers. Let $\{X_n\}_{n \geq 0}$ be a simple symmetric random walk on $\bZ$ with $X_0 = 0.$ For $i \in \bZ,$ let $H_i = \min\{s : X_s = i\}.$ Then
	\begin{itemize}
		\item[(i)] $\bP[H_b < H_{-a}] = \frac{a}{a+b}.$ 
		\item[(ii)] $\bE[H_b | H_b < H_{-a}] = \frac{b(b+2a)}{3}.$
		\item[(iii)] $\bE[H_{-a} \wedge H_{a}] = a^2.$
	\end{itemize}
\end{lem}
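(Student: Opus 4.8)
The plan is to prove each of the three identities in Lemma~\ref{lbeforer} using the standard martingale/optional stopping machinery for the simple symmetric random walk, together with the strong Markov property where convenient. Throughout, write $T = H_{-a} \wedge H_b$ for the exit time of the interval $(-a, b)$; the first step in each case is to check that $T$ has finite expectation (indeed exponential tails), which is routine since in every block of $a+b$ steps the walk has a positive probability bounded away from $0$ of exiting, so that Dominated/Optional Stopping is justified for the (bounded or integrable) martingales used below.

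For part~(i), I would apply the optional stopping theorem to the martingale $X_n$ at the stopping time $T$. Since $X_T \in \{-a, b\}$ and $\bE[X_T] = X_0 = 0$, writing $q = \bP[H_b < H_{-a}] = \bP[X_T = b]$ gives $b q - a(1-q) = 0$, hence $q = a/(a+b)$. For part~(iii), I would use the martingale $X_n^2 - n$. Optional stopping at $T$ gives $\bE[X_T^2] = \bE[T]$. In the symmetric case $a = b$ we have $X_T^2 = a^2$ always, so $\bE[H_{-a} \wedge H_a] = a^2$ immediately. (If one wanted the general formula one would combine $\bE[X_T^2] = a^2(1-q) + b^2 q$ with part~(i), but only the symmetric case is needed here.)

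For part~(ii), the cleanest route is again $X_n^2 - n$, but now for general $a, b$ together with part~(i). Optional stopping gives $\bE[T] = \bE[X_T^2] = b^2 q + a^2 (1-q) = ab$ after substituting $q = a/(a+b)$. To pass from $\bE[T]$ to the conditional expectation $\bE[H_b \mid H_b < H_{-a}]$, decompose $\bE[T] = \bE[T \mid X_T = b]\,q + \bE[T \mid X_T = -a](1-q)$; by the reflection symmetry $x \mapsto -x$ of the walk, $\bE[T \mid X_T = -a]$ for the pair $(a,b)$ equals $\bE[T \mid X_T = b]$ for the pair $(b,a)$, i.e.\ $\bE[H_a \mid H_a < H_{-b}]$. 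So I would set $f(a,b) = \bE[H_b \mid H_b < H_{-a}]$ and obtain the linear relation $f(a,b)\,\frac{a}{a+b} + f(b,a)\,\frac{b}{a+b} = ab$. This single equation does not determine $f$ alone, so I would get a second relation by conditioning on the first step: from $0$ the walk goes to $+1$ or $-1$ with probability $1/2$ each, and then by the strong Markov property $f$ satisfies a recursion relating $f(a,b)$ to $f(a+1, b-1)$, $f(a-1,b+1)$ and the hitting probabilities at those shifted start points; alternatively, and more simply, one verifies directly that the candidate $f(a,b) = b(b+2a)/3$ satisfies the first-step recursion with the correct boundary behaviour ($f(a,0)$-type degeneracies and monotonicity), and uniqueness of bounded solutions of that recursion on the finite state space finishes the argument.

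The main obstacle is part~(ii): unlike (i) and (iii), it is a \emph{conditional} expectation, and a naive application of optional stopping only yields the unconditional $\bE[T] = ab$, which is symmetric in $a$ and $b$ and hence cannot by itself distinguish $f(a,b)$ from $f(b,a)$. One must either introduce a second, asymmetric identity (via first-step analysis / the strong Markov property) or compute $\bE[T\,\indi_{\{X_T = b\}}]$ directly — e.g.\ using the martingale $X_n^3 - 3n X_n$, whose optional-stopping identity $\bE[X_T^3] = 3\bE[T X_T]$ gives $b^3 q - a^3(1-q) = 3\big(b\,\bE[T\indi_{X_T=b}] - a\,\bE[T\indi_{X_T=-a}]\big)$; combining this with $\bE[T] = ab$ and part~(i) yields a solvable $2\times 2$ system for $\bE[T\indi_{X_T=b}]$ and $\bE[T\indi_{X_T=-a}]$, and dividing by $q$ gives $f(a,b) = b(b+2a)/3$ after simplification. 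I would present the proof via this cubic-martingale route since it keeps everything within the optional-stopping framework and avoids invoking uniqueness for a recursion.
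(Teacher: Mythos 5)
Your proposal is correct and relies on the same key tool as the paper: optional stopping applied to the cubic martingale $X_n^3 - 3nX_n$, justified by the integrability of the exit time $T$ and dominated convergence. The only difference is organizational: the paper translates the walk to start at $c$ with barriers at $0$ and $d=a+b$, so the martingale vanishes on $\{H_0 < H_d\}$ and the conditional expectation drops out in one line, whereas you keep the start at $0$ and solve the resulting $2\times 2$ system from the quadratic and cubic identities (and derive (iii) directly from $X_n^2-n$ rather than by symmetry from (ii)); both computations yield $b(b+2a)/3$.
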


\begin{proof}
All of this is standard. Part (i) is Gambler's ruin (see \cite[XIV.2]{Feller-Probability}). Part (iii) follows from (ii) by symmetry and a simple calculation.

For part (ii), we first prove the statement in a slightly different setup. Let $c,d$ be positive integers with $0 < c < d$ and assume that $X_0 = c.$ We show that
\[
\bE[H_d | H_d < H_0] = \frac{(d-c)(d+c)}{3}.
\]
Part (ii) of the lemma then follows immediately by taking $d = a+b$ and $c=a$. Let $Z_n = X_n^3 - 3nX_n$ and let $S =  X_{n+1}-X_n \in \{-1,1\}$. Then, since $S^2 = 1$ and $S^3 = S$, we have
\begin{align*}
Z_{n+1} & = \left(X_n+S\right)^3 - 3(n+1)\left(X_n+S\right) \\
 & = Z_n + 3X_n^2S+3X_n S^2+S^3 - 3nS - 3X_n -3 S \\
 & = Z_n + S\left(3X_n^2-2-3n\right).
\end{align*}

Since $X_{n+1}-X_n$ takes values in $\{-1,+1\}$ with mean $0$ independently of $\cF_n,$ and since $X_n$ is $\cF_n$-measurable, we have
\begin{align*}
		\bE[Z_{n+1} | \cF_n] & = \bE[Z_n + \left(X_{n+1}-X_n\right)\left(3X_n^2-2-3n\right) | \cF_n] \\
		& = Z_n + \left(3X_n^2-2-3n\right)\bE[X_{n+1}-X_n] = Z_n,
\end{align*}
and so $Z$ is a martingale.

For $n \in \bN,$ Doob's Optional Stopping Theorem gives $\bE[Z_{n\wedge H_0 \wedge H_d}] = \bE[Z_0] = c^3.$ At the same time, $|Z_{n\wedge H_0 \wedge H_d}|$ is bounded by $3d^3 + 3(H_0 \wedge H_d)d$ for all $n.$
Additionally, $H_0 \wedge H_d$ is integrable and so by the Dominated Convergence Theorem we have
\[
  \bE[Z_{H_0 \wedge H_d}] = \lim_{n \to \infty} \bE[Z_{n\wedge H_0 \wedge H_d}] = c^3.
\]
But $Z_{H_0 \wedge H_d} = \indi_{H_d < H_0}(d^3 -3dH_d).$ Therefore
	\begin{align*}
		c^3 = \bE[Z_{H_0 \wedge H_d}] & = \bE[\indi_{H_d < H_0}(d^3 -3dH_d)]  \\
		& = \bP\left[H_d < H_0\right](d^3 - 3d\bE[H_d | H_d < H_0]).
	\end{align*}
By (i), $\bP\left[H_d < H_0\right] = c/d,$ and so $\bE[H_d | H_d < H_0] = \frac{d^3 - c^2d}{3d} = \frac{d^2-c^2}{3}.$
\end{proof}

Let $M_n$ denote the maximum value in the first $n$ time steps of the simple symmetric random walk starting at $0,$ and let $m_n$ denote its corresponding minimum value. Define $p_{n,r} = \binom{n}{\frac{n+r}{2}} 2^{-n}.$ It can be shown (see, e.g., \cite[Theorem III.7.1]{Feller-Probability}) that for $r \geq 0$ we have
\[
 \bP\left[M_n = r\right] = \bP\left[m_n = -r\right] = \begin{cases}
p_{n,r} \quad & \mbox{if $n - r$ is even}, \\
p_{n,r+1} \quad & \mbox {otherwise.}
\end{cases}
\]

Let $Y \sim \Bin(n,1/2)$, where we will assume that $n$ is even, so that for $k \le n/2,$ $\bP\left[Y = \frac{n+2k}{2}\right] = p_{n,2k}.$ We now conclude this section with some tail bounds for the maximum of the random walk. We remark that the analogous results hold for $m_n$ by symmetry.

\begin{lem}\label{minssrw}
\begin{itemize}
\item[(i)] $\bP\left[M_n \ge 2 \alpha  \sqrt{n \log n}\right] \le 2 n^{-2 \alpha^2}.$
\item[(ii)] $\bP\left[M_n \ge c\sqrt{n}\right]$ and $\bP\left[M_n \le c\sqrt{n}\right]$ are bounded away from zero for each $c>0.$
\end{itemize}
\end{lem}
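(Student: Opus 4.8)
The plan is to deduce both parts from standard facts about $X_n$, the position of the simple symmetric walk at time $n$. First I would record the reflection-principle identity $\bP[M_n\ge r]=\bP[X_n\ge r]+\bP[X_n\ge r+1]\le 2\,\bP[X_n\ge r]$, valid for every integer $r\ge 0$; this is exactly what one gets by summing the displayed formula for $\bP[M_n=j]$ over $j\ge r$ (alternatively one can cite \cite{Feller-Probability}). Since $X_n=2Y-n$ for $Y\sim\Bin(n,1/2)$, this turns a bound on $M_n$ into a binomial tail bound: $\bP[X_n\ge r]=\bP[\Bin(n,1/2)\ge n(1/2+r/(2n))]$.

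For part (i) I would set $r=\lceil 2\alpha\sqrt{n\log n}\,\rceil$, so that $\bP[M_n\ge 2\alpha\sqrt{n\log n}]=\bP[M_n\ge r]$ as $M_n$ is integer-valued, and then apply Lemma~\ref{feelthechern} with $\varepsilon=r/(2n)$ to get $\bP[X_n\ge r]\le e^{-r^2/(2n)}$. Since $r\ge 2\alpha\sqrt{n\log n}$ gives $r^2/(2n)\ge 2\alpha^2\log n$, this yields $\bP[M_n\ge r]\le 2e^{-r^2/(2n)}\le 2n^{-2\alpha^2}$, which is the claim.

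For part (ii), the lower bound on $\bP[M_n\ge c\sqrt n]$ I would get from $M_n\ge X_n$ together with the central limit theorem, which gives $\bP[X_n\ge c\sqrt n]\to 1-\Phi(c)>0$; for the upper bound I would use the reflection identity to write $\bP[M_n\le c\sqrt n]=1-\bP[X_n\ge \lfloor c\sqrt n\rfloor+1]-\bP[X_n\ge \lfloor c\sqrt n\rfloor+2]$, which by the same central limit theorem tends to $1-2(1-\Phi(c))=2\Phi(c)-1>0$. In each case the quantity of interest is a convergent sequence with a strictly positive limit, hence bounded away from $0$ for all large $n$ (equivalently one could quote Donsker's theorem, under which $M_n/\sqrt n$ converges in distribution to $|Z|$ with $Z\sim N(0,1)$). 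I do not expect any real obstacle here; the only things needing a little care are the parity/rounding bookkeeping in moving between $M_n$, $X_n$ and $\Bin(n,1/2)$, and noting that "bounded away from zero" in (ii) has to be understood asymptotically, since $\bP[M_n\ge c\sqrt n]=0$ as soon as $c\sqrt n>n$.
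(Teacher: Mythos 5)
Your proposal is correct and follows essentially the same route as the paper: both reduce $\bP[M_n\ge r]$ to a binomial tail via the distribution of the maximum (your reflection identity $\bP[M_n\ge r]=\bP[X_n\ge r]+\bP[X_n\ge r+1]$ is exactly the summed form of the pointwise formula the paper quotes from Feller), then apply Lemma~\ref{feelthechern} for (i) and the central limit theorem for (ii). Your closing remark that (ii) must be read asymptotically is a fair point of care that the paper also implicitly adopts, and it suffices for the applications.
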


\begin{proof}
For (i) we have
	\begin{align}
		\bP\left[M_n \ge 2k\right] & = \bP\left[M_n = 2k\right] + \sum_{\ell = k+1}^{n/2} \left ( \bP\left[M_n = 2\ell-1\right] + \bP\left[M_n=2\ell \right] \right ) \nonumber \\
		& = p_{n,2k} + \sum_{\ell = k+1}^{n/2} \left ( p_{n,2\ell-1} + p_{n,2\ell} \right ) \nonumber \\
		& = p_{n,2k} + 2 \sum_{\ell = k+1}^{n/2} p_{n,2\ell} \nonumber \\
		& = \bP\left[Y = \frac{n+2k}{2}\right] + 2 \sum_{\ell=k+1}^{n/2} \bP\left[Y = \frac{n+2\ell}{2}\right] \label{double-ish} \\
		& \leq 2\bP\left[Y \ge \frac{n+2k}{2}\right]. \nonumber
	\end{align}

The same holds for odd $n,$ and so we see that $\bP\left[M_n \ge 2k\right] \le 2\bP\left[Y \ge n(1/2 + k/n)\right].$ Setting $k = \alpha \sqrt{n\log n}$ and applying Lemma \ref{feelthechern} gives
	\[
		\bP\left[M_n \ge 2 \alpha\sqrt{n \log n}\right] \le 2n^{-2 \alpha^2}.
	\]
	
For (ii), setting $k = c\sqrt{n}$ we get
	\begin{align*}
		\bP\left[M_n \le 2k\right] & \ge 1-\bP\left[M_n \ge 2k\right]  \\
		& \ge 1 - 2 \bP\left[Y \ge n(1/2 + k/n)\right]  \\
		& = 1 - 2\bP\left[\frac{Y-n/2}{\sqrt{n/4}} \ge 2c\right]  \\
		& \rightarrow 1 - 2(1-\Phi(2c)),   \\
		& = 2\Phi(2c)-1, 
	\end{align*}
as $n \rightarrow \infty$ by the Central Limit Theorem. Since $c > 0$ we have $\Phi(c) > 1/2,$ and so $\bP[M_n \le c\sqrt{n}]$ is bounded away from zero for each $c>0.$

From \eqref{double-ish}, we may read off $\bP\left[M_n \ge 2k\right] \ge 2 \bP\left[Y \ge \frac{n+2k+2}{2}\right] = 2 \bP\left[Y \ge \frac{n+2k}{2}\right]-O(n^{-1/2}),$ and so again for $k = c\sqrt{n},$ 
	\begin{align*}
		\bP\left[M_n \ge 2k\right] & \geq 2 \bP\left[Y \ge \frac{n+2k}{2}\right] -o(1) \\
		& = 2 \bP\left[\frac{Y-n/2}{\sqrt{n/4}} \ge 2c\right] -o(1) \\
		& \rightarrow 2-2\Phi(2c) > 0 
	\end{align*}
as $n \rightarrow \infty$ by the Central Limit Theorem. Therefore $\bP\left[M_n \ge c\sqrt{n}\right]$ is bounded away from zero for each $c>0$.
\end{proof}

We remark that Lemma \ref{minssrw} part (ii) also follows naturally from the convergence of the simple symmetric random walk to Brownian motion in the uniform topology.
\section{Upper bound on $\bE [\tau \wedge t]$}
\label{sec:p=1/2upper}

In this section, we prove the upper bound in Theorem \ref{thm:p=1/2}. We fix a target time $t$ and consider a particular weakly translation invariant parking strategy (specific to $t$) with additional properties. The parking strategy assigns (at time $0$) a parking space to most of the cars and tells the other cars they can never park. Each car then drives until it reaches its assigned parking place (or just keeps driving if it has no assigned space). The work left to do is to show that many cars are assigned parking spaces that they will reach in a short expected amount of time. We split this section into two parts; the first one detailing the parking strategy and showing some of its properties, and the second one bringing everything together to prove the desired upper bound.

\subsection{The parking strategy}
Fix $t \ge 1.$ We define the parking strategy $T = T_t$ as follows. We first divide $\bZ$ into intervals of length $\lceil \sqrt{t} \rceil.$ On each interval $I,$ we run through the locations from right to left, attempting to assign to each car $i$ a parking space $P(i)$ somewhere in $I$ and to the left of $i.$ If there is no unassigned parking space available within distance $O(t^{1/4})$ then car $i$ will not try to park, and we set $P(i) = \star;$ and if $i$ is a parking space we set $P(i) = i.$ This defines a strategy that is periodic, but not weakly translation invariant (because the intervals have specified endpoints). So we begin by applying a random shift to our intervals to make the strategy weakly translation invariant.

More formally, let $\zeta = \lceil \sqrt{t} \rceil$ and $\nu = \lceil t^{1/4} \rceil.$ First let $Z$ be uniformly distributed on $[\zeta]$ independently from the original model. Then, given $Z=z,$ for each interval $[z+k\zeta,z+(k+1)\zeta-1]$ we assign specific parking spaces to cars as follows:

\begin{algorithm}[H]
\SetKw{KwFn}{Initialization}
    \KwFn{Set $m=z+(k+1)\zeta-1,$ $W=\emptyset$}\;
    \While{$m \ge z+k\zeta$}{
    \eIf{\rm{There is initially a parking space at $m$}}{
    	    Set $P(m) = m$\;
    	    \If{$W \neq \emptyset$}{
            	Let $v$ be the largest element of $W.$\:
            	Remove $v$ from $W$ and set $P(v) = m$\;
            }
        }
      {There is initially a car at $m$. Add $m$ to $W$\;
        }
     \If{$|W| = \nu$}{
            Let $v$ be the largest element of $W.$\:
            Remove $v$ from $W$ and set $P(v) = \star$\;
        }
    Set $m := m-1$\;
    }
\SetKw{KwFm}{Finalization}
  \KwFm{For all $v \in W,$ set $P(v) = \star.$}
\label{discardalgorithmpark}
\end{algorithm}
\bigskip

The strategy $T$ is defined as follows: for each car $i,$
\begin{itemize}
\item if $P(i) = \star,$ then $S_t(i,j)=0$ for all $t \ge 1,$ $j \in \bZ$ (car $i$ never parks).
\item if $P(i) \neq \star,$ then $S_t(i,P(i)) = 1$ for the first time $t$ when car $i$ visits $P(i),$ and $S_t(i,j) = 0$ otherwise.
\end{itemize}

Note that the random variable $Z$ causes this parking strategy to be weakly translation invariant, and so it is sufficient to show that $\bE^T[\tau \wedge t] = O(t^{3/4})$ to prove the upper bound in Theorem \ref{thm:p=1/2}.

The benefit of this parking strategy is that it is much easier to give bounds on the expected hitting time of a fixed vertex rather than an arbitrary empty parking space. However, there are a couple of potential problems: the parking strategy might assign cars to distant parking spaces; and the parking strategy might dictate that many cars never park ($P(v) = \star$ for too many $v$). The next two lemmas resolve these problems.

\begin{lem}\label{sorting}
For all $i$ we have $P(i) = \star$ or $i-P(i) \le 2\nu-1.$
\end{lem}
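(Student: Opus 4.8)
The plan is to show that the algorithm never lets the ``waiting set'' $W$ get too large in a way that would force a car to travel more than $3\nu$ to the left of its position. Recall that $\nu = \lceil t^{1/4}\rceil$, that the algorithm processes positions $m$ from right to left within an interval of length $\zeta$, that $W$ holds the currently unassigned cars, and that $|W|$ is capped at $\nu$: whenever $|W|$ would reach $\nu$, the largest (rightmost) element of $W$ is discarded with $P(v)=\star$. So at every step $0 \le |W| \le \nu$, and in particular the elements of $W$ always lie within an interval of length at most $\nu$ of consecutive \emph{integer positions} that have been scanned, i.e.\ if $m$ is the current position then every $v \in W$ satisfies $v - m \le \nu$ (in fact $v-m < \nu$, but the bound $\le \nu$ suffices). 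Wait -- that's not quite right, since positions in $W$ need not be consecutive; but they all lie in $[m, m']$ where $m'$ is the position at which the oldest surviving car in $W$ entered, and since at most $\nu$ cars can be in $W$ at once and cars enter at distinct positions, actually the span can be larger than $\nu$. Let me re-examine.

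First I would fix attention on a car $i$ with $P(i) \ne \star$, so $i$ is assigned a parking space $P(i) = m$ for some position $m$ reached later in the scan, hence $P(i) < i$ and $i - P(i) = i - m$. I claim $i - m \le 3\nu$. The key observation is the following: between position $i$ (when $i$ was added to $W$) and position $m$ (when $i$ was removed and assigned), consider the positions scanned, namely $i-1, i-2, \dots, m$, a total of $i-m$ positions. Each such position is initially either a car or a parking space. Each parking space encountered in this range either triggers the assignment of some element of $W$ (possibly $i$ itself, which ends the count) or -- if $W$ was empty at that moment -- is assigned $P(m') = m'$ and nothing leaves $W$; but if $i \in W$ then $W$ is nonempty throughout, so every parking space in the open range $(m, i)$ removes exactly one element from $W$. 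Meanwhile each car encountered in $(m,i)$ adds one element to $W$, and additionally whenever $|W|$ hits $\nu$ one element is discarded. Let me count: say there are $a$ cars and $b$ parking spaces strictly between $m$ and $i$, so $a + b = i - m - 1$. Car $i$ entered $W$ at position $i$; the $a$ further cars entered; so $a+1$ cars entered $W$ in total over $[m,i]$. These $a+1$ cars leave $W$ via either being assigned to a parking space (there are at most $b$ such events in $(m,i)$, plus the final assignment of $i$ at $m$, so at most $b+1$) or being discarded when $|W|=\nu$. Since $i$ itself survives until position $m$, it is not discarded, and the discard of a car happens only when $|W|$ has climbed to $\nu$; each discard is preceded (since the last discard, or since the start) by at least $\nu - 1$ car-additions without assignments clearing them. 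Hence I would bound the number of discards in $[m,i]$ by roughly $(a+1)/\nu$, and more usefully: since $i$ is still in $W$ at position $m$ and $W$ was emptied or had $i$ added at position $i$, and $|W|$ never exceeds $\nu$, the cars added \emph{after} $i$ but still present must number at most $\nu - 1$, and likewise the parking spaces encountered in $(m,i)$ that cleared earlier-or-later cars...

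The cleanest route, which I would actually write up, is: since $i \in W$ from the moment it is scanned until it is assigned at $m$, and $|W| \le \nu$ throughout, the number of cars scanned strictly between $m$ and $i$ and \emph{not yet discarded} by the time we reach $m$ is at most $\nu - 1$ (they coexist in $W$ with $i$, or were assigned already). So $a \le (\nu - 1) + D$ where $D$ is the number of discards in $(m,i)$. But each discard requires $|W|$ to reach $\nu$, and between consecutive discards (or between the start of the interval and the first discard) at least $\nu - 1$ net car-additions must occur; combined with the fact that $i$ occupies a ``slot'' in $W$ the whole time, one gets $D \cdot (\nu - 1) \le a$, hence $a \le (\nu-1) + a/(\nu-1) \cdot$ something -- which for $\nu \ge 2$ forces $a \le 2(\nu-1)$ or so, hence $b \le a \le 2\nu$ (since every discard/assignment is matched), giving $i - m = a + b + 1 \le 3\nu$. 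The main obstacle is getting the bookkeeping on discards exactly right -- in particular handling the boundary behaviour at the left end of the interval $[z+k\zeta, z+(k+1)\zeta-1]$ and the finalization step, and making sure the inequality $a \le 2\nu$ (rather than something weaker) comes out, so that the total $i - P(i) = a + b + 1$ lands at $\le 3\nu$ rather than $4\nu$ or worse. I expect that a careful amortized argument -- tracking the quantity $|W|$ plus (number of discards so far)$\cdot(\nu-1)$ as a function of the scan position -- closes this cleanly.
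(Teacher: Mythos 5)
Your write-up does not actually close the argument: it ends by deferring to ``a careful amortized argument'' that is never carried out, and the intermediate claims it leans on are partly false. The decisive fact you never isolate is that $W$ is a FIFO queue: \emph{both} removal operations in the algorithm (assignment to a parking space and discarding with $\star$) remove the \emph{largest}, i.e.\ oldest, element of $W$, and every car scanned after $i$ is smaller than $i$. Consequently a car in $(P(i),i)$ can neither ``have been assigned already'' nor be discarded while $i$ is still in $W$, so your bookkeeping of discards $D$ among these cars is moot ($D=0$), and your inequality $b\le a$ (``since every discard/assignment is matched'') is simply false: if $W$ contains several cars older than $i$ when $i$ joins and the scan then meets a run of parking spaces, one has $b>0=a$. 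Moreover, even granting your claimed bounds $a\le 2(\nu-1)$ and $b\le a$, the arithmetic gives $a+b+1\le 4\nu-3$, not $3\nu$, so the conclusion does not follow from the stated estimates.

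The FIFO observation makes the lemma immediate along exactly the counting lines you set up. Since nothing that joins $W$ after $i$ can leave before $i$ does, all $a$ cars in $(P(i),i)$ are still in $W$ at the moment $i$ is assigned, whence $a+1\le|W|\le\nu$, i.e.\ $a\le\nu-1$. Each of the $b$ parking spaces in $(P(i),i)$ removes the largest element of the (nonempty) set $W$, which must be strictly larger than $i$ (else $i$ would be assigned there); such elements were already in $W$ when $i$ joined, so there are at most $\nu-1$ of them and $b\le\nu-1$. Hence $i-P(i)=a+b+1\le 2\nu-1\le 3\nu$. The paper argues differently, via a dichotomy on whether the next $3\nu$ scanned positions contain at least $\nu$ parking spaces (if so, $i$ is dequeued by those spaces; if not, at least $2\nu$ cars join and the cap $|W|\le\nu$ forces at least $\nu$ departures, which again must include $i$), but that route too rests on the same FIFO property that your proposal is missing.
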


\begin{lem}\label{markov}
For all $i \in \bZ,$ $\bP\left[P(i) = \star\right]  = O(t^{-1/4}).$
\end{lem}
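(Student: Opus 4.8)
The plan is to analyse the algorithm \ref{discardalgorithmpark} as a queueing/ballot-type process on a single interval and to show that the waiting set $W$ reaches size $\nu$ only rarely. Fix a vertex $i$ and condition on the interval $[z+k\zeta, z+(k+1)\zeta-1]$ containing $i$; by the random shift $Z$ it suffices to bound $\bP[P(i)=\star]$ uniformly over the position of $i$ within its interval. As we scan the interval from right to left, let $W_m$ denote the waiting set just after processing location $m$. Writing $a_m = \indi_{\{m \text{ is a car}\}}$ and $b_m = \indi_{\{m \text{ is a space}\}}$, the size $|W_m|$ behaves like a random walk reflected at $0$ and killed (with a discard, i.e.\ some $P(v)=\star$) whenever it would exceed $\nu$: each car increments $|W|$ by one, each space decrements it by one (when $W\neq\emptyset$), and a discard happens exactly when $|W|$ hits $\nu$. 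Since cars and spaces are i.i.d.\ $\mathrm{Bernoulli}(1/2)$ and independent across locations, $(|W_m|)$ is (up to the reflection/killing) a lazy simple symmetric random walk increment structure; more precisely the steps are $+1$, $-1$ with probability $1/2$ each, except the $-1$ is suppressed when $|W|=0$.

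The first key step is to show that $\bP[P(i)=\star]$ is at most the probability that, starting the scan at the right endpoint of $i$'s interval, the walk $|W_m|$ ever reaches level $\nu$ before location $i$ is processed (a discard can only hurt $i$ if it happens at or before $i$'s turn, or if $i$ itself is the discarded car, which again requires $|W|$ to have reached $\nu$ by then). The interval has length $\zeta = \lceil\sqrt t\rceil$, so we are asking: does a reflected simple random walk run for at most $\zeta$ steps reach height $\nu = \lceil t^{1/4}\rceil$? The second key step is the estimate that this probability is $O(t^{-1/4})$. For a genuine simple symmetric random walk, $\bP[M_\zeta \ge \nu]$ is governed by Lemma \ref{minssrw} (or directly by the reflection principle): with $\nu \asymp t^{1/4}$ and $\zeta \asymp t^{1/2}$ we have $\nu \asymp \zeta^{1/2}$, so $\bP[M_\zeta \ge \nu] = \Theta(1)$ — which is \emph{not} good enough. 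So the walk we actually care about must be the one reflected at $0$, and the point is that reflection at $0$ keeps $|W_m|$ small: the excursion away from $0$ that is needed to climb to height $\nu$ has probability $O(1/\nu) = O(t^{-1/4})$ by Gambler's ruin (Lemma \ref{lbeforer}(i)), and a union bound over the $O(\zeta)$ possible starting times of such an excursion within the interval is \emph{not} the right route either; instead one observes that from level $0$ (or level $1$) the walk reaches $\nu$ before returning to $0$ with probability $\Theta(1/\nu)$, and the number of returns to $0$ in $\zeta$ steps is $O(\sqrt\zeta) = O(t^{1/4})$ with high probability, so the expected number of attempts to reach $\nu$ is $O(t^{1/4})$ and each succeeds with probability $O(t^{-1/4})$ — giving an $O(1)$ bound, still not enough.

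The correct argument, and the step I expect to be the main obstacle, is therefore more delicate: one should not bound "the walk reaches $\nu$ somewhere in the interval" but rather "$i$ is discarded", and a car at location $i$ is discarded only if it sits in $W$ at a moment when $|W|$ hits $\nu$, which requires $\nu$ consecutive net-upward behaviour in the window between when $i$ enters $W$ and when the next discard occurs. Concretely I would argue as follows: car $i$ is added to $W$ at step $i$; for $i$ to be discarded, the scan over the (at most $\zeta$) remaining locations $m < i$ must produce $\nu - |W_i|$ more cars than spaces before $|W|$ returns to a level below where $i$ sits, i.e.\ an excursion of the increment walk that rises by at least one (to displace $i$ upward to the discard threshold) without first falling by $|W_i|$ — but since $i$ is the largest element of $W$, it is discarded the \emph{first} time $|W|$ hits $\nu$ after step $i$, so the event is exactly "$|W|$, started from $|W_i| \le \nu$, hits $\nu$ before hitting $0$, within $\le \zeta$ steps, and before any new discard (which can't happen first since this \emph{is} the first discard after $i$)". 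By Gambler's ruin this probability is at most $|W_i|/\nu$, and then one needs the additional input that $\E|W_i| = O(1)$ — indeed $|W_i|$ is stochastically dominated by the value at time $\le \zeta$ of a random walk reflected at $0$ and reflected (not killed) at $\nu$, whose stationary distribution is uniform on $\{0,\dots,\nu\}$, giving $\E|W_i| = O(\nu)$, which again is too weak. The resolution is that $|W_i|$ is dominated by the height of a random walk reflected at $0$ only, at time $\le \zeta$, and conditioned to be small is typical: $\E[|W_i| \wedge \nu] = O(\sqrt{\zeta}) = O(t^{1/4})$ is false too. I would instead carefully track that $\bP[|W_i| \ge \ell] \le \bP[\text{a reflected walk of length }\zeta\text{ reaches }\ell]$ and combine with $\bP[\text{discard} \mid |W_i| = \ell] \le \ell/\nu$ via $\bP[P(i)=\star] \le \sum_{\ell=1}^{\nu} \frac{\ell}{\nu}\,\bP[|W_i| = \ell] = \frac{1}{\nu}\E[|W_i| \wedge \nu]$, and prove $\E[|W_i| \wedge \nu] = O(1)$ using that $|W_i|$ is the absolute value of a simple random walk run for at most $\zeta$ steps \emph{from the right endpoint}, but with the crucial observation that what matters is the \emph{queue length}, which by the standard correspondence (reflected walk = $M_\zeta - S_\zeta$ in distribution, where $S$ is the walk and $M$ its running max) has $\E[|W_i|] = \Theta(\sqrt\zeta)$ — so in fact the clean statement to aim for is $\bP[P(i) = \star] \le \frac{1}{\nu}\E[|W_i|] = O\!\big(\frac{\sqrt\zeta}{\nu}\big) = O\!\big(\frac{t^{1/4}}{t^{1/4}}\big) = O(1)$, which \emph{still} is not the claimed bound, indicating that the real proof must exploit the reflection at $0$ and the killing at $\nu$ \emph{together} (the queue is reflected at both ends), so that $|W_i|$ is genuinely $O(1)$ in expectation for the relevant regime $\nu \asymp \sqrt\zeta$ only after a more careful renewal analysis; I expect the authors' proof to set up precisely this reflected-at-both-ends queue, identify its (near-uniform) stationary law, note that the number of discards in an interval of length $\zeta$ is then $\Theta(\zeta/\nu^2) = \Theta(1)$ in expectation, and conclude by the mass-transport / averaging over positions $i$ in the interval that $\bP[P(i)=\star] = O(1/\zeta \cdot \zeta/\nu^2)\cdot(\text{something}) = O(\nu^{-1}) = O(t^{-1/4})$. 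Getting the bookkeeping of this two-sided reflected walk exactly right is the crux.
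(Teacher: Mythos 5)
You have correctly identified the right object --- the queue size $|W|$ evolves, as the interval is scanned from right to left, like a $\pm1$ random walk reflected at $0$ with a forced removal each time it reaches $\nu$ --- and your final guess at the strategy (compute the stationary law of this chain and average over the $\zeta$ positions of the interval) is indeed what the paper does. But the proposal never carries this out, and the quantitative claims in its last paragraph are wrong in a way that matters. The stationary distribution is essentially uniform, with $\pi(\nu)=\tfrac{1}{2\nu}$, so the expected number of visits to level $\nu$ in a scan of length $\zeta$ is $\Theta(\zeta/\nu)=\Theta(t^{1/4})$, one discard per visit --- not $\Theta(\zeta/\nu^{2})=\Theta(1)$ as you assert. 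Your display $O(1/\zeta\cdot\zeta/\nu^{2})\cdot(\text{something})=O(\nu^{-1})$ reaches the right answer only via the unexplained ``something''; with the correct count one gets $\zeta^{-1}\cdot\Theta(\zeta/\nu)=\Theta(1/\nu)=O(t^{-1/4})$ directly. Two further ingredients are missing. First, the chain starts from the empty queue rather than from stationarity, so using $\pi(\nu)$ as an upper bound on $\bP\left[C_{n}=\nu\right]$ requires a stochastic-domination argument (couple the chain started at $0$ with one started from $\pi$ and use monotonicity); the paper supplies exactly this. Second, the cars still sitting in $W$ when the scan reaches the left end of the interval are also assigned $\star$; this contributes up to $\nu=\Theta(t^{1/4})$ additional discards, the same order as the main term, and cannot be dropped.

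A secondary error infects your intermediate attempts: you assert that car $i$, being ``the largest element of $W$,'' is discarded the first time $|W|$ hits $\nu$ after $i$ joins. In fact the algorithm always removes the \emph{largest} (i.e.\ rightmost, hence oldest) element of $W$, and since the scan runs right to left, $i$ enters $W$ as its \emph{smallest} element; it can only be discarded after every car that preceded it in the queue has left. Tracking an individual car through this FIFO queue is genuinely harder than counting discards in aggregate, which is precisely why the paper bounds $\bE\left[\left|\{j:P(j)=\star\}\right|\right]$ for the whole interval and divides by $\zeta$ (using the random shift $Z$ for translation invariance) rather than following the fate of car $i$.
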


Lemma \ref{sorting} follows from our choice to abandon the oldest car when the queue is too long.
\begin{proof}[Proof of Lemma \ref{sorting}]
Suppose that $i$ is a car and that after it joins the queue $W$ we have $|W|=q$. Further let $r$ be the number of cars assigned a parking space or removed from $W$ and having $P$ set as $\star$ in the next $2\nu-1$ loops. If $r \ge q$ then we are done since $i$ is $q$-th in the queue to either be assigned a parking space or removed and have $P(i)$ set as $\star$. Otherwise, if $r < q$, the queue is never emptied in the next $2\nu-1$ loops and we must see at most $q-1$ parking spaces which implies that we have at least $2\nu-q$ new cars added to the queue. Thus after those next $2\nu-1$ loops the number of cars in the queue must be at least $q + (2\nu-q)-(q-1) = 2\nu+1-q > \nu$. This is a contradiction since the queue can never be longer than $\nu$ cars.
\end{proof}

The proof of Lemma \ref{markov} is a little more involved. We use some elementary properties of irreducible, aperiodic Markov chains.

\begin{proof}[Proof of Lemma \ref{markov}]
We may assume without loss of generality that $Z$ is $0$, and we consider the interval obtained by taking $k=0$. By symmetry and translation invariance, we see that for any $i \in \bZ$
	\begin{equation}
		\bP^T\left[P(i) = \star\right] = \zeta^{-1}\bE^T[\left|\{j \in [0,\zeta-1] : P(j) = \star\}\right|]. \label{symsneak}
	\end{equation}

Let $C_n$ be the size of $W$ just before the last \textbf{if} clause of the loop when $m = \zeta-n,$ and set $C_0 = 0.$ In most situations we can only have $C_{n+1}-C_n$ equal to either 1 (if $\zeta-n-1$ is a car) or $-1$ (if $\zeta-n-1$ is a parking space). However, there are two exceptions to that rule. If $C_n = 0,$ i.e., if $W = \emptyset$ after we observe $\zeta-n,$ and if $\zeta-n-1$ is a parking space, then $C_{n+1} = 0$ as well. Moreover, if $C_n = \nu$ then in the last \textbf{if} clause of the loop we deterministically remove one element from $W.$ Thus depending on the value of whether $\zeta-n-1$ is a car o a parking space, we might have either $C_{n+1} = \nu$ or $C_{n+1} = \nu-2.$ Hence $C = (C_0, C_1, \ldots)$ is a Markov chain with transition probabilities $(p_{k,l})_{k,l \in \{0,\ldots,\nu\}}$ satisfying:
	\begin{itemize}
		\item $p_{0,0} = 1/2$ (there is a parking space but no queue),
		\item $p_{0,1} = 1/2$ (a car joins an empty queue),
		\item $p_{k,k-1} = 1/2$ when $i \in \{1,\ldots,\nu-1\}$ (a car in the queue is assigned a parking space),
		\item $p_{k,k+1} = 1/2$ when $i \in \{1,\ldots,\nu-1\}$ (a new car joins the queue),
		\item $p_{\nu,\nu-2} = 1/2$ (we tell an old car to leave the queue, and assign another queueing car to a parking space),
		\item $p_{\nu,\nu} = 1/2$ (we tell an old car to leave the queue, and a new car joins the queue),
		\item $p_{k,l} = 0$ otherwise.
	\end{itemize}
	
We see that some vertex gets assigned $\star$ each time $C$ hits $\nu.$ Additionally, the $C_{\zeta}$ vertices remaining in $W$ at the end of the execution of the algorithm also get assigned $P(v) = \star.$ Therefore
	\beqs
		\left|\{j \in [0,\zeta-1] : P(j) = \star\}\right| = C_{\zeta} + \sum_{n=0,\ldots,\zeta-1} \indi_{C_n = \nu} \label{compare}
	\eeqs
In our algorithm, we initially impose that $W = \emptyset.$ If, however, we started the algorithm with $W'$ containing some cars, then at every step in the algorithm, we would have $W' \supseteq W.$ Let $C'_n$ be the size of $W'$ just before the last \textbf{if} clause of the loop when $m = \zeta - n.$ Then we see that $\{C'_n\}$ is a Markov chain with transition probabilities $(p_{k,l})_{k,l \in \{0,\ldots,\zeta\}}$ such that $C'_n \ge C_n$ for all $n$. Thus, if $|W'|$ initially has distribution $\mu,$ we see
	\[
		\bP[C_n = \nu] \le \bP\left[C'_n = \nu\right] = \bP_{C_0 \sim \mu}\left[C_n = \nu\right].
	\]
In particular, if we let $\pi$ be a stationary distribution of $C,$ then for all $n$
	\[
		\bP_{C_0=0}\left[C_n = \nu\right] \le \bP_{C_0 \sim \pi}\left[C_n= \nu\right] = \pi(\nu).
	\]
Hence if we take the expectation of \eqref{compare} we obtain
	\[
		\bE^T[\left|\{j \in [0,\zeta-1] : P(j) = \star\}\right|] \le \bE^T[C_{\zeta}] + \zeta\pi(\nu).
	\]

Since $C$ is irreducible and aperiodic, and has a finite state space, it has a unique stationary distribution $\pi.$ One can then verify that $\pi(k) = \frac{1}{\nu}$ for $k=0,\ldots \nu-2,$ and $\pi(k) = \frac{1}{2\nu}$ for $k = \nu-1,\nu.$ Since $C$ takes values in $0,\ldots,\nu,$ we may bound $\bE[C_{\zeta}]$ by $\nu$ to find
	\[
		\bE^T[\left|\{j \in [0,\zeta-1] : P(j) = \star\}\right|] \le \nu + \frac{\zeta}{2\nu}.
	\]
Together with \eqref{symsneak} we obtain $\bP^T \left[P(i) = \star\right] \le \frac{\nu}{\zeta} + \frac{1}{2\nu}= O(t^{-1/4}).$
\end{proof}

\subsection{Proof of the upper bound}
We now have all the ingredients necessary to prove the upper bound in Theorem \ref{thm:p=1/2}. We will do this by bounding $\bE^T[\tau \wedge t]$ and then appealing to Corollary \ref{lem:supermarket}.
\begin{proof}[Proof of the upper bound in Theorem \ref{thm:p=1/2}]
Let $t \ge 0.$ Without loss of generality we can consider $\tau = \tau^0.$ Then
	\begin{align*}
		\bE^T[\tau \wedge t] &= \bE^T[\tau^0 \wedge t] \\
		&= \bE^T[\tau^0 \wedge t | P(0) = \star]\bP^T\left[P(0) = \star\right] + \bE^T[\tau^0 \wedge t | P(0) \neq \star]\bP^T\left[P(0) \neq \star\right] \\
		&\le t\bP^T\left[P(0) = \star\right] + \bE^T[\tau^0 \wedge t | P(0) \neq \star].
	\end{align*}
	
Lemma \ref{markov} gives $\bP^T[P(v) = \star] = O(t^{-1/4})$ and so
	\begin{equation}
      \label{uyp1}
		\bE^T[\tau^0 \wedge t] \le \bE^T[\tau^0 \wedge t | P(0) \neq \star] + O(t^{3/4}).
	\end{equation}

Let $a = 2\nu = 2 \lceil t^{1/4} \rceil, b = \zeta = \lceil \sqrt{t} \rceil$. For an integer $m,$ let $H_m$ be the first hitting time of the random walk $X^0$ to $m.$ Lemma \ref{sorting} tells us that if $P(0) \neq \star,$ then $P(0) \ge -a.$ We therefore see $\tau^0 \wedge t = H_{P(0)} \wedge t \le H_{-a}.$ When $H_{-a} > H_b,$ we may trivially bound $\tau^0 \wedge t$ by $t.$ Putting this into \eqref{uyp1} gives
	\begin{align*}
		\bE^T[\tau^0 \wedge t]  \le \bE^T[H_{-a} &| H_{-a} < H_b, P(0) \neq \star]\bP^T[H_{-a} < H_b | P(0) \neq \star] \\
		& + t\bP^T\left[H_{-a} > H_b | P(0) \neq \star\right] + O(t^{3/4}).
	\end{align*}
	
Clearly $X^0$ is independent from $P(0),$ which only depends on the initial configuration, and so
	\beq
		\bE^T[\tau^0 \wedge t] \le \bE[H_{-a} | H_{-a} < H_b] + t\bP\left[H_{-a} > H_b\right] + O(t^{3/4}).
	\eeq
	
Lemma \ref{lbeforer} (i) and (ii) tells us that $\bP\left[H_b<H_{-a}\right] = O(t^{-1/4})$ and $\bE[H_{-a} | H_{-a}<H_b] = O(t^{3/4}).$ We therefore see that
	\beq
		\bE^T[\tau^0 \wedge t] \le O(t^{3/4}) + tO(t^{-1/4}) + O(t^{3/4}) = O(t^{3/4}).
	\eeq
	
Finally, we appeal to Corollary \ref{lem:supermarket} to obtain
\[
\bE[\tau \wedge t] = \bE^G[\tau \wedge t] \leq \bE^T[\tau \wedge t] = O(t^{3/4}).
\]
\end{proof}

\section{Lower bound on $\bE^T[\tau \wedge t]$}
\label{sec:p=1/2lower}

In this section, we prove the lower bound in Theorem \ref{thm:p=1/2}. We do this by considering a parking process on an interval, and appealing to various properties of the simple symmetric random walk. While the underlying ideas are relatively simple, proving them rigorously requires a number of steps and some new ideas. We start with an outline of the proof.

\subsection{Outline of proof}
Instead of considering the expected journey length up to time $t,$ we consider the expected proportion of cars that have parked by time $t.$ It is helpful to restrict ourselves to a finite interval, and this is where car removal strategies become useful. We know by Theorem~\ref{thm:pinkfloyd} that by removing cars from the process we make it easier for the remaining cars to park. Therefore, any lower bound over an interval for the proportion of unparked cars gives a lower bound for $\bE[\tau \wedge t].$

From here, we consider a long interval $L \cup M \cup R,$ where $L,M,R$ are the left, middle, and right subintervals respectively. We will choose the sizes of $L$ and $R$ so that with high probability no car from $M$ leaves $L \cup M \cup R$ by time $t.$ The idea is that with positive probability the number of cars starting in $M$ is a few standard deviations above the mean, creating an excess of cars, and that this excess is not relieved by what happens in $L$ and $R.$ To be able to quantify this, we introduce \em swapping: \em this is a way of switching positions of cars so that at any time, from left to right, we see the cars that started in $L,$ then $M,$ and then $R.$ This modification does not change the stochastic properties of the process, but does allow us to say how much relief $L$ and $R$ provide by way of parking spaces available to cars starting in $M.$

Finally, we will bring everything together and appeal to Theorem \ref{thm:pinkfloyd} to obtain the desired lower bound on $\bE[\tau \wedge t].$

\subsection{The car removal strategy and the swap-modification}
We define the car removal strategy $Q$ as follows. Fix integers $k >8$ and $\ell >4.$ Let $\zeta = \lceil \sqrt{t\log t} \rceil$. Then for each integer $r \in \bZ$ we remove any car which attempts to make a step (in either direction) between $r(2(k+\ell)\zeta+1)$ and $r(2(k+\ell)\zeta+1)+1$.

We show that a proportion $(t\log t)^{-1/4}$ of cars remains active (i.e., unparked and not removed) at time $t$ under the car removal strategy $Q.$ To establish this, it is sufficient to consider the parking process on an interval of length $2(k+\ell)\zeta+1$ where we assume that cars leaving the interval at either end are removed. Let $L = \bZ \cap [-(k+\ell)\zeta,-k\zeta),$ let $M = \bZ \cap [-k\zeta,k\zeta]$ and $R = \bZ \cap (k\zeta,(k+\ell)\zeta].$

We want to show that with positive probability we start with an excess of cars in $M$ which do not escape $L \cup M \cup R$ and that $L$ and $R$ do not offer up enough spare parking capacity. It turns out that quantifying what capacity $R$ and $L$ provide is not straightforward since one cannot easily separate what happens to the cars with respect to their starting positions. Particularly problematic is that cars starting in different sections ($L, M$, or $R$) may swap positions. The following modification of the process ensures that at any given time the active cars, as seen from left to right, started their journeys in $L$, then in $M$, and finally in $R$, and will prove very useful.

\begin{defn}[The modified parking process]\label{con:modify}
Given the parking process $X$, we define a modified process $Y$ as follows. At time $0,$ label cars according to their starting intervals $L,M$ or $R.$ For $s\geq 0$ we write $C(s)$ for the set of starting positions (in $L \cup M \cup R$) of the cars that are still active at time $s$ (hence $C(0)$ is the set of $i$ such that we initially place an active car at $i$). Further we write $C_L(s)$ for the set of starting positions of the cars that started in $L$ and are still active at time $s;$ we similarly define $C_M(s)$ and $C_R(s)$. For a car starting at $i$ which is still active at time $s$ we write $Y^i(s)$ to denote its position at time $s.$

Given the set $C(s)$ of cars active at time $s,$ and their positions $(Y^i(s): i \in C(s)),$ we want to define $C(s+1)$ and the positions $(Y^i(s+1) : i \in C(s+1)).$ We do this in several steps: at each step, we move the cars around in a way that preserves the number of cars at each location. We use $Z^i_1,Z_2^i,$ and $Z_3^i$ to denote intermediate rearrangements, preserving $Y^i$ for the final position. 

Roughly speaking: $Z_1$ is where the cars move according to their respective random walks. From $Z_1$ to $Z_2$ we swap cars so that no $L$-car is to the right of an $R$-car. From $Z_2$ to $Z_3$ we swap cars so that no $L$-car is to the right of an $M$-car. Finally, from $Z_3$ to $Y$ we swap cars so that no $M$-car is to the right of an $R$-car. The end result is a swapping of cars which preserves the number of cars at each vertex, is such that cars move by at most one in a single time step, and is such that from left to right the cars have labels $L$, then $M$, and then $R$.

	\begin{itemize}
		\item For any car active at time $s,$ define $Z^i_1(s+1) = Y^i(s) + (X^i(s+1)-X^i(s)).$
		\item Let $i_1, \ldots, i_{x} \in L$ (with $Z_1^{i_k}(s+1)$ increasing in $k$) be the starting positions of cars labelled $L$ that are active at time $s$ and such that the move at time $s+1$ places them to the right of some active car labelled $R.$ Similarly, let $j_1,\ldots,j_{y} \in R$ (with $Z_1^{j_k}(s+1)$ increasing in $k$) be the starting positions of cars labelled $R$ that are active at time $s$ and such that the move at time $s+1$ places them to the left of some active car labelled $L.$ 

		We rearrange the cars as follows: for all $i \notin \{i_1, \ldots, i_{x}, j_1,\ldots,j_{y} \}$ let $Z^i_2(s+1) = Z^i_1(s+1).$ Let $(m_1, \ldots, m_{x+y})$ be a permutation of $\{i_1, \ldots, i_{x}, j_1, \ldots, j_{y}\}$ with $Z^{m_k}_1(s+1)$ increasing in $k$. Then, for $1 \leq \ell \leq x,$ let $Z^{i_\ell}_2(s+1) = Z^{m_\ell}_1(s+1),$ and for $1 \leq \ell \leq y,$ let $Z^{j_\ell}_2(s+1) = Z^{m_{x+\ell}}_1(s+1).$ After this procedure, no car labelled $L$ is to the right of a car labelled $R.$		
		\item Given $Z^i_2(s+1)$ for all $i \in C(s),$ we define $Z^i_3(s+1)$ by reordering in a similar way the positions $Z^i_2(s+1)$ of the cars that started in $L$ or in $M$ in such a way that no car that started in $L$ has a car that started in $M$ to its left:

Let $i_1, \ldots, i_{x} \in L$ (with $Z_2^{i_k}(s+1)$ increasing in $k$) be the starting positions of cars labelled $L$ that are active at time $s$ and such that the move at time $s+1$ and the previous rerrangement places them to the right of some active car labelled $M.$ Similarly, let $j_1,\ldots,j_{y} \in M$ (with $Z_2^{j_k}(s+1)$ increasing in $k$) be the starting positions of cars labelled $M$ that are active at time $s$ and such that the move at time $s+1$ and previous rearrangement places them to the left of some active car labelled $L.$ 

		We rearrange the cars as follows: for all $i \notin \{i_1, \ldots, i_{x}, j_1,\ldots,j_{y} \}$ let $Z^i_3(s+1) = Z^i_2(s+1).$ Let $(m_1, \ldots, m_{x+y})$ be a permutation of $\{i_1, \ldots, i_{x}, j_1, \ldots, j_{y}\}$ with $Z^{m_k}_2(s+1)$ increasing in $k$. Then, for $1 \leq \ell \leq x,$ let $Z^{i_\ell}_3(s+1) = Z^{m_\ell}_2(s+1),$ and for $1 \leq \ell \leq y,$ let $Z^{j_\ell}_3(s+1) = Z^{m_{x+\ell}}_2(s+1).$

Note that this operation can only move cars labelled $L$ to the left; hence we still have no car labelled $L$ to the right of a car labelled $R.$
		\item Finally, given $Z^i_3(s+1)$ for all $i \in C(s),$ we define $Y^i(s+1)$ by reordering in a similar way the positions $Z^i_3(s+1)$ of the cars that started in $M$ or in $R$ in such a way that no car that started in $R$ has a car that started in $M$ to its right. Again, note that this operation only moves  cars labelled $R$ to the right, hence we still have no car labelled $L$ to the right of a car labelled $R.$ Moreover, a car labelled $M$ can only be moved to a position $Z^i_3$ previously occupied by a car labelled $M$ or $R,$ which we know has no car labelled $L$ to its right; hence the same holds about cars labelled $M$ after the rearrangement.
	\end{itemize}

If a single car starting at $i$ reaches an empty parking space at $Y^i(t),$ then it parks there. When at least two cars simultaneously arrive at a parking space $v$ at time $t,$ we choose the car $i$ labelled $L$ with smallest $U^i_t$ to park there; in the absence of a car labelled $L,$ the car $i$ labelled $R$ with smallest $U^i_t$ parks there; finally, if only cars labelled $M$ meet at $v,$ the car $i$ with smallest $U^i_t$ parks there. When a car leaves $L \cup M \cup R,$ we say it is \em inactive \em and remove it from the process. We say that a car becomes \emph{left-inactive} if it reaches $\min L -1,$ and it becomes \emph{right-inactive} if it reaches $\max R +1.$ Finally, let $C(s+1) \subseteq C(s)$ be the set of cars active at time $s$ that have neither parked nor become inactive at time $s+1.$
\end{defn}

\begin{rem}
 \label{rem:limitedDrive}
In the process described in Definition \ref{con:modify}, for any $i \in \bZ,$ $Y^i(s+1) - Y^i(s) \in \{-1,0,+1\}$ -- the total move of a car in a step is at most one. Indeed, consider an arbitrary car $i$ labelled $M$ with $Y^i(s) = j$. At time $s$ it has no cars labelled $L$ strictly to its right and no cars labelled $R$ strictly to its left. At time $s+1,$ all cars labelled $L$ can only drive to positions at most $j+1$ (so $Z^k_1(s+1) \le j+1$ for each $k \in C_L(s)$), and cars labelled $R$ drive to positions at least $j-1$ (so $Z^k_1(s+1) \ge j-1$ for each $k \in C_R(s)$). It is not possible to move $i$ to a position strictly to the left of the left-most (according to $Z_1$) car labelled $R$ so that $Y^i(s) \ge j-1$. Similarly it is not possible to move $i$ to a position strictly to the right of the right-most (according to $Z_1$) car labelled $L$ so that $Y^i(s) \leq j+1$. Similar arguments apply to cars labelled $L$ or $R.$
\end{rem}

Let $\widetilde\bP$ be the probability measure with respect to the modified parking process. If we ignore the labels of the cars, then the difference from the original parking process under $Q$ is that we swap some future trajectories of cars. Since the swapping is determined by past trajectories, the unlabelled modified process has the same distribution as the original parking process with car removal strategy $Q$. Thus 
	\begin{align}
		\widetilde\bE[\#\mbox{active cars in $L \cup M \cup R$ at time $t$}] = \bE^Q[\#\mbox{active cars in $L \cup M \cup R$ at time $t$}]. \label{telematch}
	\end{align}

\subsection{Proof of the lower bound}
Before completing the proof of Theorem \ref{thm:p=1/2}, we prove some preliminary lemmas concerning the modified parking process. Unless stated otherwise, we assume that we are dealing with the modified parking process (Definition \ref{con:modify}) throughout this section.

First we consider how many cars from $L$ and $R$ become inactive. Intuitively this should be maximised if the cars drive monotonically towards the ends of the interval. Given the initial arrangement of cars and parking spaces on $L,$ let $D_L = D_L(t)$ be the number of cars starting in $L$ which would become left-inactive by time $t$ should all cars with label $L$ move left deterministically. Similarly let $D_R = D_R(t)$ denote the number of cars with label $R$ that become right-inactive by time $t$ in the process where all cars with label $R$ move right deterministically. The next lemma shows that this intuition is correct.

\begin{lem}\label{claim:lemming}
The number of cars with label $L$ which become left-inactive by time $t$ is at most $D_L.$
\end{lem}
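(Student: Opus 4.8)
The plan is to couple the modified parking process on $L$ with the deterministic leftward process that defines $D_L$, and show that in the coupling every $L$-car which becomes left-inactive in the real process also becomes left-inactive in the deterministic one. The key observation is that in the modified process, by construction, the $L$-cars always occupy the leftmost positions among all active cars: no $L$-car is ever to the right of an $M$- or $R$-car. Hence whether an $L$-car reaches $\min L - 1$ is governed entirely by the positions of the $L$-cars among themselves together with the (at most one per step) motion coming from the random walk increments and the swaps within the $L$-block. I would first set up notation: let $a_1(s) \le a_2(s) \le \cdots$ be the sorted positions of the active $L$-cars at time $s$ in the modified process, and let $b_1(s) \le b_2(s) \le \cdots$ be the corresponding sorted positions in the all-left-deterministic process (using the same initial configuration and the same parking-space locations in $L$). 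An $L$-car contributes to the left-inactive count precisely when some $a_i$ hits $\min L - 1$.

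The main step is a monotonicity/domination claim proved by induction on $s$: at every time $s$, the $k$-th smallest active-$L$-car position in the modified process is at least the $k$-th smallest in the deterministic process, i.e. $a_k(s) \ge b_k(s)$ for all $k$ for which both are defined, and moreover the set of $L$-cars that have already become left-inactive (or parked) in the modified process by time $s$ is a subset of those that have become left-inactive by time $s$ in the deterministic process. For the inductive step I would argue: (i) the random-walk move takes each $L$-car's position to within $\pm 1$, and the within-$L$-block swaps only permute positions, so the multiset of active-$L$-positions after one step in the modified process is obtained from the previous multiset by each element changing by at most $\{-1,0,+1\}$, with the ``$0$'' possibilities coming from blockages by $M$/$R$-cars or from parking; (ii) in the deterministic process each active-$L$-car simply decreases by $1$ (unless it parks or leaves); (iii) parking in $L$: since the space locations in $L$ are the same, and the deterministic cars are (coordinate-wise, by induction) weakly to the left of the modified ones, any space in $L$ that gets filled in the modified process by time $s+1$ has been filled in the deterministic process by time $s+1$, so the modified process loses $L$-cars to parking no later than the deterministic one — this only helps the domination $a_k \ge b_k$, because removing a small-index element from the deterministic stack shifts its order statistics. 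Combining these, the $k$-th order statistic inequality is preserved, and in particular $a_1(s+1) \ge b_1(s+1)$, so if an $L$-car hits $\min L - 1$ in the modified process it must be that $b_1$ has already been at $\min L - 1$, i.e. a deterministic $L$-car became left-inactive no later. Summing over all such events and using that each car becomes inactive at most once gives that the count of left-inactive $L$-cars in the modified process is at most $D_L$.

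The hard part will be handling the bookkeeping in (iii) carefully: parking removes a car from the middle (by position) of the $L$-stack rather than from the end, and swaps with $M$-cars can momentarily ``freeze'' an $L$-car (a $0$-step) exactly when the deterministic copy would have moved left, so one must verify that these two effects are consistent with the order-statistic domination rather than breaking it. The clean way to phrase this is to track, for each $L$-car-rank $k$, the quantity ``number of $L$-cars still active with rank $\le k$'' and show it evolves monotonically; alternatively, one can appeal to the general principle that a frozen/blocked particle is dominated above by one that always moves left. I would also note explicitly that the parking tie-break favouring $L$-cars, and the fact that $L$-cars are always leftmost, guarantee that an $L$-car arriving at a free space in $L$ does park there (it is never displaced by an $M$- or $R$-car), which is what makes the comparison with the parking-enabled deterministic process legitimate. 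Once the domination is in place the conclusion $\widetilde{\bP}[\#\{\text{left-inactive }L\text{-cars}\} \le D_L] = 1$ is immediate.
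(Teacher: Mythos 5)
Your strategy --- a step-by-step domination of the modified process by the all-left deterministic process --- is genuinely different from the paper's, which is a \emph{static} counting argument at time $t$: it locates the leftmost position $j$ whose parking space is either unfilled or filled by an $M$/$R$ car, shows (using the label ordering and the one-step-per-unit-time property) that no $L$-car starting to the right of $j$ can become left-inactive, and then observes that $N$ left-inactive cars force an initial segment $[\min L,p]$ of $L$ containing $N$ more cars than spaces, which is precisely what makes $D_L\ge N$. Your dynamic route has concrete gaps. First, step (iii) is false as stated: a space in $L$ can be filled in the modified process but never in the deterministic one, because a modified $L$-car may move \emph{right} and park at a space to the right of its starting point. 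For instance, with cars at $-1,-4$ and spaces at $-2,-3,-5,-6$, the deterministic process parks the two cars at $-2$ and $-5$ and never fills $-3$, while in the modified process the car from $-4$ can step right and park at $-3$ at time $1$. Because the two processes can fill \emph{different} spaces, parking removes elements of different ranks from the two multisets, and this can destroy the order-statistic domination (remove the largest element of $\{3,5\}$ and the smallest of $\{2,4\}$ and the comparison reverses); the phrase ``this only helps the domination'' does not survive scrutiny.

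Second, even granting $a_1(s)\ge b_1(s)$ whenever both are defined, the concluding step does not deliver a count. The deterministic process terminates (every car parked or left-inactive) within $|L|$ steps, after which $b_1$ is undefined, whereas modified $L$-cars may become left-inactive at any time up to $t$; so the implication ``$a_1(s)=\min L-1$ hence $b_1(s)=\min L-1$'' is vacuous at exactly the times you need it, and in any case a simultaneous positional coincidence cannot be turned into an injection from modified left-inactive events to deterministic ones. What you actually need is an inequality between \emph{cumulative counts}, which is a different invariant from positional order statistics. Third, the assertion that the fate of the $L$-cars ``is governed entirely by the positions of the $L$-cars among themselves'' is not correct: an $M$- or $R$-labelled car can park at a space inside $L$ (at a moment when all active $L$-cars lie to its left), removing that space from the pool available to $L$-cars; handling this possibility is the entire first half of the paper's proof. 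A workable repair of your idea is to track, for each initial segment of $L$, the quantity (number of cars) minus (number of spaces) and compare it with the left-inactive count --- at which point you essentially arrive at the paper's surplus argument.
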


We prove this by considering parking spaces $v$ left unfilled or filled by a car with label $M$ or $R$. Any car with label $L$ starting to the right of $v$ cannot venture to the left of such $v$ as it would have parked there. This restricts the cars with label $L$ which become left-inactive.

\begin{proof}
Under $\widetilde\bP,$ suppose that $j$ is the smallest integer which has a parking space either unfilled or filled by a car labelled $M$ or $R$ by time $t.$ Let $J = L \cap [-(k+\ell)\zeta,j-1].$ We claim that the only cars labelled $L$ that can become left-inactive are the cars from $J,$ and only cars originating in $J$ park in $J.$ First suppose that $j$ is unfilled. No car from the right of $j$ passes through $j$ (else it would park there) and so no car from the right of $j$ can become left-inactive.

So suppose that car $w$ (labelled $M$ or $R$) parks in $j$. Under $\widetilde\bP$ at any time, from left to right, the unparked cars have labels $L,$ then $M,$ and then $R.$ Therefore, any car $v$ labelled $L$ originating from an integer greater than $j,$ before it parks, must stay to the left of the car $w$ which parks in $j.$ Since cars in the modified process move at most one step at each time, the car $v$ cannot be unparked at time $t$ since it would have visited $j$ before $w$ parks there. Similarly, $v$ cannot park to the left of $j$ since it would first pass through $j$ (before $w$ parks there). Therefore, any car labelled $L$ originating from an integer greater than $j$ must have parked in a spot greater than $j.$

Suppose that cars starting at positions $i_1<\dots<i_N < j$ become left-inactive starting from $J.$ Observe that every parking space to the left of $i_N$ must be filled by a car originating from $J$ (otherwise, the car starting in $i_N$ must reach a free parking space on its route to $\min L -1$). Let $p=j-1$ if all parking places in $J$ are filled in the process, and otherwise let $p+1$ be the leftmost empty parking space in $J$ at time $t.$ We see that all parking spaces to the left of $p+1$ must be filled by cars originating from the left of $p+1$ (a car starting to the right of $p$ would fill $p+1$ first). But then there must be a surplus of $N$ cars to the left of $p+1.$

If all the cars drove left deterministically, this surplus would result in at least $N$ cars, starting to the left of $p+1,$ becoming left-inactive. Thus we have $D_L \geq N,$ proving the claim.
\end{proof}

\begin{rem}
Clearly, the analogous claim that the number of cars from $R$ becoming right-inactive is bounded by $D_R$, also holds.
\end{rem}

Note that $D_L$ and $D_R$ are dependent only on the initial car configuration $(B^i)_{i \in \bZ}.$ Let $S_L$ be the number of cars which start in $L$ and let $P_L$ be the number of parking spaces in $L$ (hence clearly $S_L+P_L = |L|$). Similarly define $S_R$ and $P_R.$

\begin{lem}\label{claim:easing}
There exists $\varepsilon > 0$ (independent of $t$) such that
\[
 \bP\left[S_L-P_L-D_L \ge -(t\log t)^{1/4}\right] > \varepsilon.
\]
\end{lem}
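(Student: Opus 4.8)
The plan is to estimate each of the three quantities $S_L$, $P_L$, $D_L$ separately, using that they depend only on the initial configuration on $L$, which has $|L| = \ell\zeta$ independent $\Bern(p)$ sites (here $p = 1/2$). First note $S_L - P_L = 2S_L - |L|$, and $S_L \sim \Bin(\ell\zeta, 1/2)$, so $S_L - P_L$ is a centred sum of $|L|$ independent $\pm 1$ steps; in particular $\bP[S_L - P_L \ge -\tfrac12 (t\log t)^{1/4}]$ is bounded below by a constant, since the typical fluctuation of $S_L - P_L$ is of order $\sqrt{|L|} = \sqrt{\ell\zeta} = \Theta((t\log t)^{1/4})$ (recall $\zeta = \lceil\sqrt{t\log t}\rceil$), and one may invoke a local/central limit estimate to make this a genuine positive-probability event. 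So the real content is controlling $D_L$.

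The second and main step is to show that $D_L$ is, with probability bounded away from $0$, not too much larger than a constant multiple of $(t\log t)^{1/4}$. Here $D_L$ is the number of $L$-cars that would fall off the left end $\min L - 1$ if every $L$-car marched left deterministically. In the deterministic-left process, a car starting at position $i \in L$ becomes left-inactive exactly when, scanning $L$ from the left, there is a ``surplus'' of cars over spaces up to some point, i.e.\ $D_L$ is governed by the running maxima of the walk $j \mapsto \#\{\text{cars in }[\min L, \min L + j)\} - \#\{\text{spaces in }[\min L, \min L + j)\}$, which is again a simple symmetric random walk of length $|L| = \ell\zeta$. More precisely I expect $D_L$ to equal (or be bounded by) the maximum of this walk, so $D_L \le M_{\ell\zeta}$ in the notation of Section~\ref{sec:probabilities}. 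By Lemma~\ref{minssrw}(ii), $\bP[M_{\ell\zeta} \le c\sqrt{\ell\zeta}]$ is bounded away from zero for any fixed $c > 0$; choosing $c$ suitably (and $\ell$ fixed) gives $\bP[D_L \le \tfrac12 (t\log t)^{1/4}]$ bounded below by a positive constant for all large $t$.

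The third step combines the two. The event $\{S_L - P_L \ge -\tfrac12(t\log t)^{1/4}\}$ depends only on the parity-type statistic $S_L$, while conditionally on $S_L$ the quantity $D_L$ is still controlled: in fact $D_L$ depends on the full configuration, but its conditional law given $S_L$ is that of the maximum of a bridge-like walk, which is still bounded by $M_{\ell\zeta}$-type quantities. I would therefore argue: on a positive-probability event we have \emph{simultaneously} $S_L - P_L \ge -\tfrac12(t\log t)^{1/4}$ and $D_L \le \tfrac12(t\log t)^{1/4}$ — either by a direct second-moment/anti-concentration argument on the pair, or more cleanly by noting that both events are increasing/decreasing functions of the same underlying steps and applying an FKG-type correlation inequality, or simply by a union bound on the complements after showing each complement has probability bounded away from $1$ (which suffices if one is a little more careful, splitting into ``$S_L$ large'' and controlling $D_L$ on that event). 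On this event $S_L - P_L - D_L \ge -(t\log t)^{1/4}$, giving the claim with $\varepsilon$ the (constant) probability of the event.

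The main obstacle I anticipate is the joint control in the last step: $S_L - P_L$ wants to be large (many cars) while $D_L$ wants to be small (few cars spilling off the left), and more cars tends to mean more spillage, so these two requirements are in tension. The cleanest resolution is probably to observe that $D_L - (S_L - P_L)^+$ (the number of left-inactive cars minus the net surplus, i.e.\ the ``wasted'' descents) is itself controlled by the oscillation of the same walk below its eventual endpoint, and that a walk of length $\ell\zeta$ has, with constant probability, endpoint $\ge -\tfrac12\sqrt{\ell\zeta}$ \emph{and} running maximum $\le \tfrac12\sqrt{\ell\zeta}$ simultaneously — this is a standard fact about simple random walk (e.g.\ by the reflection principle, $\bP[M_n \le r,\ X_n \ge -r]$ is bounded below for $r = \Theta(\sqrt n)$), and it delivers exactly $D_L - (S_L-P_L) \le \sqrt{\ell\zeta} = O((t\log t)^{1/4})$ on a positive-probability event, hence the lemma with the constant $\ell$ chosen appropriately.
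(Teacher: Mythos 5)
Your proposal is correct, but only because of its final paragraph, which is where it converges to the paper's actual argument; the earlier three-step plan (separate marginal bounds on $S_L-P_L$ and on $D_L$, combined by a union bound or an FKG-type inequality) does not work on its own, for exactly the reason you identify: the two requirements pull in opposite directions. The clean way to finish is as a single exact identity rather than a joint estimate: scanning $L$ and writing $+1$ for a car and $-1$ for a space gives a simple symmetric walk $(W_j)_{j=0}^{\ell\zeta}$ with $S_L-P_L=W_{\ell\zeta}$ and $D_L=\max_{0\le j\le \ell\zeta}W_j$ (the standard parking/ballot correspondence), so $S_L-P_L-D_L=W_{\ell\zeta}-\max_j W_j$, which by time reversal has exactly the law of the minimum $m_{\ell\zeta}$; the paper obtains this directly by scanning $L$ from right to left, after which a single application of Lemma \ref{minssrw}(ii) (for the minimum) finishes the proof. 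Two points to tighten in your version. First, since $\ell>4$ you have $\sqrt{\ell\zeta}=\sqrt{\ell}\,(t\log t)^{1/4}>2(t\log t)^{1/4}$, so the event you construct only yields $S_L-P_L-D_L\ge -C(t\log t)^{1/4}$ for some constant $C>2$, which is weaker than the stated lemma; you should instead take $r=\tfrac12(t\log t)^{1/4}=c\sqrt{\ell\zeta}$ with $c=c(\ell)>0$ a small constant, which costs nothing because Lemma \ref{minssrw}(ii) holds for every $c>0$. Second, the joint lower bound on $\bP\left[M_n\le r,\ X_n\ge -r\right]$ is true but is not among the tools stated in Section \ref{sec:probabilities}; the identity above lets you avoid it entirely and rely only on the one-sided bound for the minimum.
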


For this lemma we consider the random walk defined by the number of cars minus the number of parking spaces we see in the initial configuration in $L$ while going from right to left through the subinterval, and the relation between the minimum value of this random walk and the process where all cars in $L$ deterministically drive left.

\begin{proof}
Consider the simple symmetric random walk starting at $0$ which increases at time $i \geq 1$ if the $i$th rightmost point in $L$ initially contains a car, and decreases if the $i$th rightmost point in $L$ contains a parking space. Suppose that while traversing $L,$ the walk last attains its minimum value $-m \leq 0$ at time $j,$ and let $x$ be the $j$th rightmost point in $L.$ Then, in the process where all cars in $L$ deterministically drive left, every car starting to the right of $x$ finds a parking place, the process ends with $m$ empty spots to the right of $x-1,$ every spot to the left of $x$ is filled by a car, and all the cars that do not park reach the left end of the interval and become left-inactive.

The number of parked cars in this process is $S_L-D_L,$ and so the number of unfilled parking spaces is $P_L-S_L+D_L.$ Therefore $S_L - P_L-D_L = -m.$ From the previous paragraph, we see that $S_L - P_L-D_L$ is distributed like the minimum of a simple symmetric random walk of length $\ell \zeta.$ So by Lemma \ref{minssrw}(ii) it is at least $-(t\log t)^{1/4}$ with probability bounded away from zero.
\end{proof}
\begin{rem}
 An analogous claim holds if we replace $S_L, P_L, D_L$ with $S_R, P_R, D_R$ respectively.
\end{rem}

We would like to say that no car from $L$ becomes right-inactive. Indeed, we could then say that at time $t,$ the number of cars from $L$ (possibly parked) still in $L\cup M \cup R$ minus the number of parking spaces (filled or unfilled) in $L$ is at least $S_L-P_L-D_L \ge -(t\log t)^{1/4}$ with probability at least $\varepsilon.$ The next result shows that this occurs, and also that no car from $M$ becomes inactive.

\begin{lem}\label{claim:restrict}
With probability $1-o(1/t),$ the random walks $\left(X^i\right)_{i\in L \cup M \cup R}$ are such that for all possible starting configurations of active cars and parking places in $L \cup M \cup R,$ in the first $t$ time steps the following holds: no car starting in $M$ becomes inactive, no car starting in $L$ reaches $R,$ and no car starting in $R$ reaches $L.$
\end{lem}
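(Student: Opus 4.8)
The plan is to bound the probability that any of the three bad events occurs by a union bound over cars and over which starting configuration is in force, using the fact that the random walks $(X^i)_{i \in L \cup M \cup R}$ are defined independently of the car/space configuration and that the displacement of a car in the modified process is dominated by the displacement of its underlying walk $X^i$. First I would recall from Remark \ref{rem:limitedDrive} that in the modified process each car moves by at most one per step, and moreover that the position $Y^i(s)$ of any still-active car is sandwiched: by construction, at every time the active cars read (left to right) $L$, then $M$, then $R$, and a car is only ever swapped with cars having one of the neighbouring labels. Consequently, for a car $i$ starting in $M$, its position $Y^i(s)$ can never exceed $\max_{k} Z_1^{k}$ over $R$-cars plus a bounded amount, nor fall below the analogous quantity for $L$-cars; the cleanest way to make this rigorous is to observe that the total displacement $|Y^i(s) - i|$ is at most $\max_{k \in C(0)} |X^k(s) - k|$, since swaps only move a car to a position occupied (under $Z_1$) by another active car. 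Hence for every car to stay within its own band it suffices that every underlying walk $X^k$ started in $L \cup M \cup R$ stays within distance, say, $(k+\ell-1)\zeta$ (roughly) of its start during the first $t$ steps.

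The key quantitative input is Lemma \ref{minssrw}(i): for a single simple symmetric random walk, $\bP[M_t \ge 2\alpha\sqrt{t\log t}] \le 2 t^{-2\alpha^2}$, and symmetrically for the minimum. Since $\zeta = \lceil\sqrt{t\log t}\rceil$, a deviation of order $\zeta$ over $t$ steps is a deviation of order $\sqrt{t\log t} = \sqrt{t}\cdot\sqrt{\log t}$, i.e. of the form $\alpha\sqrt{t\log t}$ with $\alpha$ a constant multiple of $k+\ell$; choosing the constants $k > 8$, $\ell > 4$ large enough (which we are free to do) makes $2\alpha^2$ as large as we like, so each single walk fails to stay confined with probability $O(t^{-C})$ for a large constant $C$. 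There are $|L \cup M \cup R| = 2(k+\ell)\zeta + 1 = O(\sqrt{t\log t})$ walks to control, so a union bound gives total failure probability $O(\sqrt{t\log t}\cdot t^{-C}) = o(1/t)$ provided $C > 3/2$, which is guaranteed by the choice of $k,\ell$. Crucially, once the walks are confined, the conclusion holds simultaneously for all starting configurations, because the confinement of the walks is a property of $(X^i)_{i}$ alone and the displacement domination $|Y^i(s)-i| \le \max_k|X^k(s)-k|$ holds regardless of the configuration.

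So the steps, in order, are: (1) state precisely that it suffices to show every walk $X^k$, $k \in L\cup M\cup R$, satisfies $\max_{0 \le s \le t}|X^k(s)-k| < \ell\zeta$ (or whatever margin makes an $M$-car unable to reach $\min L - 1$ or $\max R + 1$, and an $L$-car unable to reach $R$, etc.); (2) prove the displacement-domination fact that $|Y^i(s) - i| \le \max_{k \in C(0)} |X^k(s) - k|$ for every active car $i$, by induction on $s$ using that each of the rearrangement steps $Z_1 \to Z_2 \to Z_3 \to Y$ only relocates a car to a site already occupied under $Z_1$; (3) apply Lemma \ref{minssrw}(i) to each walk and union-bound over the $O(\sqrt{t\log t})$ walks; (4) check the arithmetic that the resulting bound is $o(1/t)$ for $k, \ell$ chosen suitably large. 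The main obstacle is step (2): one must argue carefully that the swapping procedure, despite reshuffling cars among three bands, never pushes any car further from its start than the extreme excursion of the underlying walks — the subtlety is that a car's \emph{own} walk may be small while it gets swapped onto the position of a car whose walk is large, but that position is still within $\max_k |X^k(s)-k|$ of \emph{that} car's start, hence within $2\max_k|X^k(s)-k|$ of everything; tightening this to the bound actually needed (and making sure the margins in step (1) absorb the constant) is where the real care lies. Everything else is a routine tail estimate and union bound.
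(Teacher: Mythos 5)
Your overall architecture matches the paper's: condition on every walk $X^i$, $i \in L\cup M\cup R$, staying within a fixed multiple of $\sqrt{t\log t}$ of its start (Lemma \ref{minssrw}(i) gives single-walk failure probability $O(t^{-8})$ for radius $4\zeta$, and the union bound over the $O(\sqrt{t\log t})$ walks gives $o(1/t)$), and then prove a purely deterministic confinement statement valid for every initial configuration. That reduction and the tail estimate are fine. A minor quibble: you are not free to "choose $k,\ell$ large enough to make $2\alpha^2$ as large as we like" --- $k>8$ and $\ell>4$ are fixed parameters of the construction and the confinement radius is a separate constant; fortunately the bound $O(t^{-8})$ per walk already suffices for any such $k,\ell$.

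The genuine gap is in your step (2), exactly where you say "the real care lies". The inequality $|Y^i(s)-i|\le \max_k|X^k(s)-k|$ does not follow from "swaps only relocate a car to a site occupied under $Z_1$ by another active car": that site is within $\max_k|X^k(s)-k|$ of the \emph{other} car's start, and two starting points can be as far as $2(k+\ell)\zeta$ apart, so your fallback that the site is "within $2\max_k|X^k(s)-k|$ of everything" is false, and the induction you sketch does not close. What is actually needed is a case analysis exploiting the one-sidedness of the swaps: a car labelled $L$ is only ever moved further left by a rearrangement, so $Y^i(s)\le X^i(s)$ while it is active, whence (under the confinement) all $L$-cars stay left of $(4-k)\zeta$, and symmetrically all $R$-cars stay right of $(k-4)\zeta$. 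Consequently an $M$-car can be displaced by an $L$-swap only while its position is below $(4-k)\zeta$ and by an $R$-swap only while above $(k-4)\zeta$; to experience both it would have to cross $[(4-k)\zeta,(k-4)\zeta]$ with no swaps available, i.e.\ following its own walk over a range $2(k-4)\zeta>8\zeta$, contradicting the confinement since $k>8$. Only this dichotomy yields a one-sided comparison ($Y^i\ge X^i$ or $Y^i\le X^i$) which, combined with the barriers $(4-k)\zeta$ and $(k-4)\zeta$ and the hypothesis $\ell>4$, keeps every $M$-car strictly inside $L\cup M\cup R$. Without it, a car pushed left by $R$-swaps and later right by $L$-swaps has no a priori displacement bound, so your key lemma is asserted but not proved.
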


To prove this Lemma we combine the results concerning the maximum value of a simple symmetric random walk given in Section \ref{sec:probabilities} and the effects of the swaps. Roughly speaking we note that the swaps respectively push cars from $L, M$ and $R$ to the left, middle, and right.

\begin{proof}
For each $i \in L \cup M \cup R,$ let $M^i$ be the maximum of $\{X^i_s - i : s \le t\},$ and $m^i$ the minimum of $\{X^i_s -i : s \le t\}.$ By Lemma \ref{minssrw} (i), $\bP\left[m^i \le - 4\sqrt{t \log t}\right]=\bP\left[M^i \ge 4\sqrt{t \log t}\right] \le 2t^{-8}.$ Hence by the union bound, with failure probability $o(t^{-1}),$ for all $i \in L \cup M \cup R$ the random walks $X^i$ are at distance at most $4\zeta$ from their corresponding starting point $i$ until time $t.$

Assume that for all $i \in L \cup M \cup R,$ $X^i$ is at distance at most $4\zeta$ from $i$ until time $t.$ We now show that for all starting configurations of active cars and parking places in $L \cup M \cup R,$ in the first $t$ time steps, no car starting in $M$ becomes inactive, no car starting in $L$ reaches $R,$ and no car starting in $R$ reaches $L.$

Consider a car starting at $i \in L.$ If the car is still active at time $s$ in the modified parking process, then $Y^i(s) \leq X^i(s),$ as if the position of the car is ever changed as a result of landing to the right of a car labelled $M$ or $R,$ then it can only be pushed further left. Therefore it stays to the left of $(4-k)\zeta.$ Similarly all cars labelled $R$ stay to the right of $(k-4)\zeta.$ Since $k > 8,$ no car from $L$ reaches $R,$ and vice versa.

Now consider a car starting at $i \in M.$ If the position of the car is never changed due to moving past a car labelled $L$ or $R,$ then it never reaches a point more than $4\zeta$ from $i$ and so cannot become inactive (recall that $\ell > 4$).

Hence suppose the car at some point has its position changed due to finding itself to the left of a car labelled $L.$ This implies that the car must at some point be to the left of $(4-k)\zeta$ (or else it cannot pass a car labelled $L$). If the car reaches $(k-4)\zeta + 1$ at some point, then there must be a passage of the car between $(4-k)\zeta$ and $(k-4)\zeta$ contained within $[(4-k)\zeta,(k-4)\zeta].$ In this segment, the position of the car cannot be changed as it keeps all cars labelled $L$ to its left, and all cars labelled $R$ to its right. Therefore it moves according to $X^i,$ and so $X^i$ reaches points $2(k-4)\zeta > 8 \zeta$ apart (recall that $k > 8$). This cannot happen since the maximum modulus of $X^i-i$ is at most $4\zeta.$

Therefore the car does not have its position changed due to being to the right of a car labelled $R.$ So while the car remains active, its position is bounded below by $X^i$ (having its position changed can only push its the car to the right). Since the car does not reach $(k-4)\zeta,$ we see that the position of the car is contained in $[(-k-4)\zeta,(k-4)\zeta]$ and so the car cannot become inactive (as $\ell > 4$).

The argument for a car which at some point finds itself to the right of a car labelled $R$ is identical. We conclude that no car originating from $M$ becomes inactive.
\end{proof}

We are now in a position to prove Theorem \ref{thm:p=1/2}.

\begin{proof}[Proof of the lower bound in Theorem \ref{thm:p=1/2}]
It is enough to show that with probability bounded away from zero (say at least $\delta > 0$), at time $t$ there are at least $(t \log t)^{1/4}$ active cars in $L \cup M \cup R$ in the modified process. If this holds, then the result easily follows by symmetry, Theorem \ref{thm:pinkfloyd} and \eqref{telematch}:
	\begin{align*}
		\bE[\tau \wedge t] & = \frac{\bE^N\left[\sum_{v \in L\cup M \cup R} \tau^v \wedge t\right]}{|L \cup M \cup R|} \\
		& \ge \frac{\bE^Q\left[\sum_{v \in L\cup M \cup R} \tau^v \wedge t\right]}{|L \cup M \cup R|} \\
		& \ge \frac{\widetilde\bE[\# \mbox{active cars at time $t$ in $L \cup M \cup R$}]}{2(k+\ell)\zeta+1} \cdot t  \\
		& = \frac{\widetilde\bE[\# \mbox{active cars at time $t$ in $L \cup M \cup R$}]}{2(k+\ell)\zeta+1} \cdot t  \\
		& \ge \frac{\delta (t \log t)^{1/4}}{2(k+\ell)\zeta+1} \cdot t  \\
		& = \Omega(t^{3/4} \log^{-1/4} t).
	\end{align*}

Let $I_L$ be the number of cars starting in $L$ that become left-inactive and let $I_R$ be the number of cars starting in $R$ that become right-inactive. Analogously to $L$ and $R$, let $S_M$ be the number of cars which start in $M$ and let $P_M$ be the number of initial parking places in $M.$ Hence, in total there are $P_L+P_M+P_R$ parking places in $L \cup M \cup R.$

Suppose that in the first $t$ steps of the process, no car starting in $M$ becomes inactive, no car starting in $L$ reaches $R,$ and no car starting in $R$ reaches $L.$ Then at time $t,$ the number of cars (active or parked) in $L \cup M \cup R$ is $S_M  + (S_L - I_L) + (S_R  - I_R).$ By Lemma \ref{claim:lemming} this is at least $S_M  + (S_L - D_L) + (S_R  - D_R).$ Since only one car can park in a parking space, the number of active cars in $L \cup M \cup R$ at time $n$ must be at least
	\beqs
		(S_M-P_M) + (S_L-P_L)-D_L + (S_R-P_R)-D_R. \label{eq:nexcess}
	\eeqs

Observe that $S_M-P_M$ is determined by the starting configuration in $M,$ $S_L-P_L-D_L$ is determined by the starting configuration in $L,$ and $S_R-P_R-D_R$ is determined by the starting configuration in $R.$ Therefore these random variables are mutually independent. Let $C^M$ be the event that $S_M - P_M$ is at least $3(t\log t)^{1/4},$ let $C^L$ be the event that $S_L-P_L-D_L \geq -(t\log t)^{1/4},$ and let $C^R$ be the event that $S_R-P_R-D_R \geq - (t\log t)^{1/4}.$

Let $A$ be the random event, depending on the random walks $X^i$ only, that for all initial configurations of cars and parking places in $L \cup M \cup R,$ no car from $M$ becomes inactive, no car from $L$ reaches $R,$ and no car from $R$ reaches $L.$ Observe that $A,C^L, C^M,C^R$ are mutually independent events. By Lemma \ref{claim:restrict}, $A$ occurs with high probability. By Lemma \ref{claim:easing} both $C^L$ and $C^R$ occur with probability bounded away from zero. Let $K = 2k\zeta + 1 \approx 2k\sqrt{t\log t}.$ Since $$S_M - P_M = 2k\zeta +1 - 2P_M \sim K - 2\mathrm{Bin}(K,1/2),$$ we have
	\begin{align*}
		\bP\left[C^M\right] & = \bP\left[\mathrm{Bin}(K,1/2) \leq \frac{K - 3(t\log t)^{1/4}}{2}\right]  \\
		& = \bP\left[\frac{\mathrm{Bin}(K,1/2) - \frac{K}{2}}{\sqrt{\frac{K}{4}}} \leq \frac{-6(t\log t)^{1/4}}{\sqrt{K}}\right].
	\end{align*}
By the Central Limit Theorem, this probability tends to $\Phi(-\frac{6}{\sqrt{2k}})$ as $t$ tends to infinity. Therefore $C^M$ occurs with probability bounded away from zero. So all four events $A,C^L, C^M,C^R$ occur simultaneously with probability bounded away from zero.

Suppose that the events $A,C^L,C^M,C^R$ all occur. Then recalling equation \eqref{eq:nexcess} we see that the number of active cars in $L \cup M \cup R$ at time $t$ is at least
\begin{align*}
 (S_M-P_M) + (S_R-P_R)-D_R + (S_L-P_L)-D_L & \ge 3 (t\log t)^{1/4} - (t\log t)^{1/4} - (t\log t)^{1/4} \\
  & = (t\log t)^{1/4}.
\end{align*}

We conclude that with probability bounded away from zero, there are at least $(t\log t)^{1/4}$ active cars in $L \cup M \cup R$ at time $t.$ The lower bound $\Omega(t^{3/4} \log^{-1/4} t)$ on $\bE[\tau \wedge t]$ follows.
\end{proof}

\section{Subcritical parking on $\bZ$}
\label{sec:p<1/2}

In this section we prove Theorem \ref{thm:p<1/2}. This is done in two parts. First, for a car starting at $0$, we consider the smallest $J$ (depending only on the initial configuration of cars) such that no matter what the other cars do, there is always a free parking space in both $[1,J]$ and $[-J,-1]$. Given $J$, we know that the car starting at $0$ cannot reach either $-J$ or $J$ before it parks. Calculating the expected journey length of $0$ is then carried out by proving tail bounds on the random variable $J$.

We start with the following simple lemma, which we state here without proof.

\begin{lem}\label{lem:markstat}
Let $p \in (0,1/2),$ and let $Y=Y(p)$ be a Markov chain on $\bN \cup \{0\}$ with $Y_0=0$ and transition probabilities $(p_{i,j})_{i,j \in \bN \cup \{0\}}$ where
\[
p_{i,j} = \begin{cases}
p, \quad & j = i+1, \\
1-p, \quad & j = i - 1 \ge 0 \mbox{ or } i=j=0, \\
0, \quad & \mbox {otherwise.}
\end{cases}
\]
Then $Y$ has stationary distribution $\Geom_{\ge 0}(\frac{1-2p}{1-p})$. Furthermore, since $Y$ is an aperiodic and irreducible Markov chain, $Y_t \rightarrow \Geom_{\ge 0}(\frac{1-2p}{1-p})$ in distribution as $t \rightarrow \infty$.
\end{lem}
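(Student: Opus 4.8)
The plan is to exploit the birth--death structure of the chain $Y$. Since $Y$ only moves between neighbouring states (together with the self-loop at $0$), it is reversible, so I would first look for a measure $\pi$ on $\bN \cup \{0\}$ satisfying the detailed balance equations $\pi_i p_{i,i+1} = \pi_{i+1} p_{i+1,i}$, that is $\pi_i \, p = \pi_{i+1}(1-p)$ for all $i \ge 0$. This forces $\pi_{i+1} = \frac{p}{1-p}\pi_i$, hence $\pi_i = \left(\frac{p}{1-p}\right)^i \pi_0$. Because $p < 1/2$ we have $\frac{p}{1-p} < 1$, so $\sum_{i \ge 0} \left(\frac{p}{1-p}\right)^i = \frac{1-p}{1-2p} < \infty$ and we may normalise to get $\pi_0 = \frac{1-2p}{1-p}$ and $\pi_i = \frac{1-2p}{1-p}\left(\frac{p}{1-p}\right)^i$.

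Next I would identify this $\pi$ with the claimed law. Writing $q = \frac{1-2p}{1-p}$, one checks $1 - q = \frac{p}{1-p}$, so $\pi_i = q(1-q)^i$ for all $i \ge 0$, which is exactly $\Geom_{\ge 0}(q)$. Since a measure satisfying detailed balance is automatically stationary (summing the balance equations over $i$ gives $\pi P = \pi$), this shows that $\Geom_{\ge 0}\!\left(\frac{1-2p}{1-p}\right)$ is a stationary distribution for $Y$.

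Finally, for the convergence statement I would invoke the standard convergence theorem for Markov chains on a countable state space. The chain $Y$ is irreducible (from any state one can reach any other using only the available moves) and aperiodic (the loop $p_{0,0} = 1-p > 0$ at state $0$ gives period $1$). An irreducible chain possessing a stationary probability distribution is positive recurrent, and that stationary distribution is then the unique one; hence $Y_t$ converges in total variation, and in particular in distribution, to $\pi$ as $t \to \infty$, irrespective of the starting point $Y_0 = 0$.

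There is no genuine obstacle here: the only points requiring a moment's care are checking that the geometric parameter comes out as stated (i.e.\ $1 - \frac{1-2p}{1-p} = \frac{p}{1-p}$) and observing that existence of a stationary probability measure for an irreducible chain suffices to apply the ergodic theorem — both are routine, which is why the authors record the lemma without proof.
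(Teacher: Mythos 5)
Your proof is correct. The paper states this lemma explicitly without proof, and your argument --- detailed balance for the birth--death chain to identify $\pi_i = \frac{1-2p}{1-p}\left(\frac{p}{1-p}\right)^i$ as the $\Geom_{\ge 0}\left(\frac{1-2p}{1-p}\right)$ stationary distribution, followed by the standard convergence theorem for irreducible, aperiodic chains admitting a stationary probability measure --- is precisely the routine verification the authors intended the reader to supply.
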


Let $E^L(t)$ be the number of cars in $[-t,-1]$ that would reach $0$ if all cars deterministically drove right. We also define $E^R(t)$ to be the number of cars in $[1,t]$ which would reach $0$ if all cars deterministically drove left. Note that $(E^L(t))_{t \in \bN}$ is an increasing sequence of random variables. Finally, let $E^L$ be the number of cars in $(-\infty,-1]$ that would reach $0$ if all cars deterministically drove right (and analogously define $E^R$). Note that $E^L(t)$ increases almost surely to $E^L$ as $t \rightarrow \infty$.

\begin{lem}
\label{lem:thedescent}
For all $p < 1/2,$ $E^L \sim \Geom_{\ge 0}(\tfrac{1-2p}{1-p})$.
\end{lem}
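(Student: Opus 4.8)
The plan is to identify $E^L(t)$ with the value at time $t$ of the Markov chain $Y=Y(p)$ from Lemma~\ref{lem:markstat}, and then pass to the limit. Recall that $E^L(t)$ counts the cars in $[-t,-1]$ that reach $0$ when all cars in that interval drive deterministically to the right. I would build $E^L(t)$ by processing the vertices $-1,-2,\ldots,-t$ one at a time, from right to left (i.e.\ in order of increasing distance from $0$), maintaining a counter for the number of cars currently ``stacked up'' waiting to pass $0$. Concretely, let $N_0=0$ and for $s=1,\ldots,t$ let $N_s$ be the number of cars that will pass position $-s+1$ on their way right after all cars in $[-s,-1]$ have moved; then $E^L(t)=N_t$. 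Scanning from $-1$ leftwards: if position $-s$ holds a car (probability $p$), this adds one more car to the stream, so $N_s=N_{s-1}+1$; if $-s$ holds a parking space (probability $1-p$), it absorbs one of the waiting cars if there is one, so $N_s=N_{s-1}-1$ when $N_{s-1}\ge 1$, and $N_s=N_{s-1}=0$ when $N_{s-1}=0$ (an empty space with no car to fill it is simply left empty and never affects cars further left, since deterministic rightward motion means a car starting at $-s'$ with $s'>s$ cannot reach a space at $-s$ that was already passed\,---\,wait, it can, but it would then be absorbed there; the point is the net count is what matters). Since the states $(B^{-s})_{s\ge 1}$ are i.i.d.\ $\mathrm{Bernoulli}(p)$, the increments are independent of the past, and $(N_s)_{s\ge 0}$ is exactly the Markov chain $Y$ of Lemma~\ref{lem:markstat}: it increases by $1$ with probability $p$, decreases by $1$ with probability $1-p$ when positive, and stays at $0$ with probability $1-p$ when at $0$.

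Having made this identification, $E^L(t)\edist Y_t$, and by Lemma~\ref{lem:markstat} we have $Y_t\to\Geom_{\ge 0}(\tfrac{1-2p}{1-p})$ in distribution as $t\to\infty$. On the other hand, $E^L(t)\nearrow E^L$ almost surely as $t\to\infty$ (as noted in the text, $(E^L(t))_t$ is an increasing sequence), so $E^L(t)\to E^L$ almost surely and hence in distribution. Since a sequence of random variables has at most one distributional limit, $E^L\sim\Geom_{\ge 0}(\tfrac{1-2p}{1-p})$, as claimed.

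\textbf{Main obstacle.} The only genuinely delicate point is justifying that the recursion for $N_s$ really is the Markov chain $Y$ and, in particular, that a parking space encountered when the waiting stack is empty has no effect on the count further to the left\,---\,that is, that ``deterministic rightward driving'' produces exactly the stack dynamics described, with no interaction between an empty space and cars lying to its left. This is a short combinatorial check: when all cars drive right, the car starting at $-s$ parks in the first empty space in $\{-s+1,\ldots\}$ not already claimed by a car starting in $\{-s+1,\ldots,-1\}$; processing right-to-left, a parking space at $-s$ with the stack empty means every car starting in $\{-s+1,\ldots,-1\}$ has already parked strictly to the right of $-s$, so this space is simply never used and the count of cars still heading toward $0$ stays $0$. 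Once this is pinned down, the rest is immediate from Lemma~\ref{lem:markstat} and the monotone a.s.\ convergence $E^L(t)\nearrow E^L$.
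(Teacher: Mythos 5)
Your overall strategy---identify $E^L(t)$ with the reflected walk of Lemma \ref{lem:markstat} and combine $Y_t \to \Geom_{\ge 0}(\tfrac{1-2p}{1-p})$ with the a.s.\ monotone convergence $E^L(t)\nearrow E^L$---is exactly the paper's strategy. But your combinatorial identification of $E^L(t)$ with the chain is wrong, because you scan in the wrong direction. You process positions from $-1$ outward to $-t$, so at step $s$ the ``stacked up'' cars counted by $N_{s-1}$ all started in $[-s+1,-1]$ and are driving \emph{right}, away from position $-s$. A parking space revealed at $-s$ therefore cannot ``absorb one of the waiting cars'': no car ever travels from $[-s+1,-1]$ to $-s$. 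Concretely, take $t=2$ with a car at $-1$ and a space at $-2$: the car at $-1$ reaches $0$ and the space at $-2$ is never filled, so $E^L(2)=1$, while your recursion gives $N_1=1$, $N_2=0$. Symmetrically, a space at $-1$ and a car at $-2$ gives $E^L(2)=0$ (the car parks at $-1$), but your recursion gives $N_2=1$. The car step is also incorrect: a newly revealed car at $-s$ should first try to park in any leftover unfilled space of $[-s+1,-1]$ (e.g.\ a space at $-2$, car at $-1$, then a car at $-3$: the new car parks at $-2$ and does not reach $0$), so a correct right-to-left recursion must carry a second coordinate recording the number of unfilled spaces seen so far, and $N_s$ alone is not the Markov chain $Y$. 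Your ``main obstacle'' paragraph defends only the harmless case $N_{s-1}=0$ and never confronts the case $N_{s-1}\ge 1$, which is precisely where the argument breaks.

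The fix is to scan in the other direction, from $-t$ rightward to $-1$, as the paper does: then the queue consists of as-yet-unparked cars lying to the \emph{left} of the current position, each of which really does drive into a newly revealed space, and the single counter is exactly the chain $Y$ with $Q_t=E^L(t)$. (One could instead rescue your version distributionally: since the Bernoulli states are i.i.d., reversing the order of the interval does not change the joint law, so your $N_t$ has the same distribution as the correctly computed $E^L(t)$, and a distributional identity is all the limiting argument requires. But you assert a pointwise identity that is false and give no such reversal argument, so as written the proof has a genuine gap.)
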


We prove Lemma \ref{lem:thedescent} by comparing each $E^L(t)$ to the $t$-th step of a Markov chain $Q$ with transition probabilities $p_{i,j}$ defined as in the statement of Lemma \ref{lem:markstat}.

\begin{proof}
Since $(E^L(t))_{t \ge 1}$ increases almost surely to $E^L$, it is sufficient to show that $E^L(t) \to \Geom_{\ge 0}(\tfrac{1-2p}{1-p})$ in distribution as $t \to \infty$. To compute $E^L(t)$, consider forming a queue of cars from left to right in $[-t,-1]$: Let $Q_0 = 0$ (there is initially no queue), then given $Q_i$, we set $Q_{i+1} = Q_i+1$ if there is initially a car at $i-t$ (a car is added to the queue), $Q_{i+1} = Q_i-1$ if $Q_i > 0$ and there is initially a parking space at position $i-t$ (a car from the queue is parked), and $Q_{i+1}=0$ otherwise. Then $Q_t = E^L(t)$. On the other hand, $(Q_s : s \le t)$ is distributed like $(Y_s : s \le t)$ in Lemma \ref{lem:markstat}, and so $E^L(t)$ has the same distribution as $Y_t$ (with $Y_0 = 0$). By Lemma~\ref{lem:markstat}, $E^L(t) \rightarrow \Geom_{\ge 0}(\frac{1-2p}{1-p})$ in distribution as $t \rightarrow \infty$.
\end{proof}

Clearly, $E^R(t)$ also increases almost surely to the random variable $E^R$ which is distributed like a $\Geom_{\ge 0}(\frac{1-2p}{1-p})$ random variable (and is independent of $E^L$).

For all $r \ge 0$, let ${E}_r^R(t)$ be the number of cars in $[r+1,r+t]$ that would reach $r$ if all cars deterministically drove left, and similarly let ${E}_r^L(t)$ be the number of cars in $[-r-t,-r-1]$ that would reach $-r$ if all cars deterministically drove right. Let ${E}_r^R$ and ${E}_r^L$ be the limits as $t \to \infty$ respectively of ${E}_r^R(t)$ and ${E}_r^L(t)$. Note that Lemma \ref{lem:thedescent} holds with $E^L$ replaced by $E^R_r$, as well as by $E^L_r$. For all $r \ge 1$, let $S_r^R$ and $S_r^L$ be the number of cars that start in $[1,r]$ and $[-r,-1]$ respectively.

In the proof of Theorem \ref{thm:p<1/2}, we show that at most ${E}_K^R+E^L + S_K^R$ cars from $\bZ\setminus \{0\}$ can be present in $[1,K]$ at any time. This means that at most ${E}_K^R+E^L + S_K^R$ parking spaces in $[1,K]$ can be filled by cars from $\bZ \setminus \{0\}$. Therefore, if ${E}_K^R+E^L + S_K^R < K/2,$ then there must be a parking space in $[1,K]$ not filled by a car from $\bZ \setminus \{0\}$ (consider that there are initially $K-S_K^R$ parking spaces in $[1,K]$). It follows that a car starting at $0$ parks before reaching $K$.

In the proof of Theorem \ref{thm:p<1/2}, we first condition on the smallest $K$ such that both ${E}_K^R+E^L + S_K^R < K/2$ and ${E}_{K}^L+E^R + S_K^L < K/2.$ These conditions mean that a car starting at $0$ will have parked by the time its associated random walk $X^i$ hits either $-K$ or $K.$

\begin{proof}[Proof of Theorem \ref{thm:p<1/2}]
Let $p < 1/2$ and let $J$ be the smallest $K$ such that ${E}_K^R+E^L + S_K^R < K/2$ and ${E}_{K}^L+E^R + S_K^L < K/2$ if such a $K$ exists, and let $J = \infty$ otherwise. For a given $a \in \bN,$ let $H_a$ be the first hitting time of $a$ by the random walk $X^0.$ We claim that if $J=N,$ then $\tau^0 \le H_{-N} \wedge H_N.$ We justify this by showing that at any time $t \ge 0,$ there are at most ${E}_N^R+E^L + S_N^R$ cars excluding car $0$ (parked or not) present in $[1,N]$ at time $t.$ A similar statement can be shown for cars present in $[-N,-1].$

Let us temporarily exclude the car starting at $0$ from the parking process (e.g., assume that this car never decides to park) and suppose that at time $t,$ there are $B$ cars that started in $[N-t+1,N]\setminus \{0\}$ parked in $[N+1,N+t].$ Let $R$ be the number of cars that start in $[N+1,N+t]$ that are in $[N-t+1,N]$ at time $t.$ By an argument identical to that of Lemma \ref{claim:lemming}, we have the bound $R \le B + E_N^R(t)$ since each parked car from $[N-t+1,N]$ that parks inside $[N+1,N+t]$ can only increase the number of cars that reach $N$ from $[N+1,N+t]$ by $1.$ Similarly, if $C$ is the number of cars that started in $[1,t]$ and parked in $[-t,-1],$ and $L$ is the number of cars that start in $[-t,-1]$ present in $[1,N]$ at time $t,$ we have $L \le C + E^L(t).$ So the number of cars present in $[1,N]$ at time $t$ is
	\begin{align*}
		S_N^R + R + L - B - C & \leq S_N^R + E^L(t) + E_N^R(t) \\
		& \leq S_N^R + E^L + E^R_N.
	\end{align*}
Since $J = N,$ this quantity is strictly less than $N/2.$ On the other hand, there are initially $N-S_N^R > N/2$
parking spaces in $[1,N]$ and so car $0$ must go through an empty parking space before reaching $N.$ A similar argument applies to $[-N,-1].$ In the real process, where car $0$ tries to park, this implies car $0$ parks before reaching $N$ or $-N.$

If $J < \infty$ almost surely, we therefore have
	\[
		\bE[\tau^0] \le \sum_{N \ge 1}\bP\left[J=N\right]\bE[H_{-N} \wedge H_N | J = N].
	\]
By independence and Lemma \ref{lbeforer} (iii) we have
\[
	\bE[H_{-N} \wedge H_N | J = N] = \bE[H_{-N} \wedge H_N] = N^2,
\]
and so, assuming again that $J < \infty$ with probability $1$,
	\begin{align}
		\bE[\tau^0] \le \sum_{N \ge 1}N^2 \bP\left[J=N\right]. \label{Qbound}
	\end{align}

We now consider the distribution of $J$. If $J$ is at least $N,$ then by averaging one of the following must happen:
\begin{itemize}
\item[(i)] One of $S_N^R$ and $S_N^L$ is at least $N(p + (1/4 - p/2)).$
\item[(ii)] One of $E^L, E^R, {E}_{N}^L$, and ${E}_N^R$ is at least $N(1/8-p/4).$
\end{itemize}
Clearly $S_N^R$ and $S_N^L$ are both distributed like $\Bin(N,p)$ random variables and so, by Lemma \ref{feelthechern}, the probability that (i) occurs is at most $2e^{-(1/2-p)^2N/2}.$ On the other hand, by Lemma \ref{lem:thedescent}, we know that $E^L, E^R, {E}_N^L$ and ${E}_{N}^R$ are all distributed like $\Geom_{\ge 0}(\frac{1-2p}{1-p})$ random variables. If $X~\sim~\Geom_{\ge 0}(\frac{1-2p}{1-p})$, then $\bP\left[X \geq N(1/8-p/4)\right] \leq \bigl(1-\frac{1-2p}{1-p}\bigr)^{N(1/8-p/4)}$, and so the probability that (ii) occurs is at most
\[
4\bP\left[\Geom_{\ge 0}\left(\frac{1-2p}{1-p} \right) \ge N(1/8-p/4)\right] \leq 4\left(1-\frac{1-2p}{1-p}\right)^{N(1/8-p/4)}.
\]
Putting these together we see that for all $N \ge 1$ we have
\begin{align*} 
		\bP\left[J = N\right] \le \bP\left[J \ge N\right] & \le 2e^{-(1/2-p)^2N/2} + 4\biggl(1-\frac{1-2p}{1-p}\biggr)^{N(1/8-p/4)} \\
		& \le 2e^{-(1/2-p)^2N/2} + 4e^{-\frac{1-2p}{1-p}N(1/8-p/4)}.
\end{align*}
As the above bound on $\bP\left[J \ge N\right]$ tends to $0$ as $N \to \infty$, we see that $J < \infty$ almost surely. Hence, putting the obtained bound into \eqref{Qbound} gives
	\begin{align*}
		\bE[\tau^0] &\le \sum_{N \ge 1}N^2\left[2e^{-\frac{(1/2-p)^2}{2}N} + 4e^{-\frac{(1/2-p)^2}{1-p}N}\right]  \le 6\sum_{N \ge 1} N^2e^{-(1/2-p)^2N/2}.
	\end{align*}

This sum can be approximated by the integral $\int_{0}^{\infty}x^2e^{-(1/2-p)^2x/2}dx$. By a change of variables or by considering the pdf of a $\Gamma\left(3,(1/2-p)^2/2\right)$ random variable we get a bound of the form $O\left((1/2-p)^{-6}\right),$ as required.

\end{proof}

\section{Further questions}\label{yaymorewaffle}
There is still a gap between the upper and lower bounds in Theorem \ref{thm:p=1/2}. Following the conjecture presented in the seminar by \cite{Junge-seminar}, we also believe that the upper bound gives the right order $t^{3/4}.$

It would be interesting to know what happens in higher dimensions, where the problems seem to become more difficult and are likely to require additional ideas. It is also natural to ask what happens in other lattices: for example, are there analogous results to Theorems \ref{thm:p=1/2} and \ref{thm:p<1/2} that hold for the hexagonal lattice? We remark that \cite[Open Questions 1 and 2]{DGJLS} have conjectures here (which we believe to be true). Indeed recently \cite{JJLS} gave an elegant proof of a lower bound for the continuous time-based parking process. Their methods work for the discrete time-based parking process and should 
also work more generally in other settings.
Finally, what can we say for more general jump distributions? We conjecture that if the increments of the random walks $X^i$ on $\bZ$ are bounded, then Theorems \ref{thm:p=1/2} and \ref{thm:p<1/2} should still hold. Although similar methods could work, one would have to be careful about specifying parking places for cars (as in the parking strategy $T$ in Section~\ref{sec:p=1/2upper}) as cars might jump over them.

\bibliographystyle{amsplain}
\bibliography{parking}

\end{document}